\numberwithin{equation}{section}
\numberwithin{figure}{section}
\newtheorem{thm}{Theorem}[section]
\newtheorem{lem}[thm]{Lemma}
\newtheorem{cor}[thm]{Corollary}
\theoremstyle{definition}
\newtheorem{defn}[thm]{Definition}
\newtheorem{defn-rem}[thm]{Definition and Remark}
\newtheorem{exmp}[thm]{Example}
\newtheorem{rem}[thm]{Remark}
\newtheorem{ques}[thm]{Question}
\def\X{{\mathbb X}}
\def\L{{\mathbb L}}
\def\H{{\text {\bf H}}}
\def\ra{\longrightarrow}
\def\Z{{\mathbb Z}}
\def\Y{{\mathbb Y}}
\def\P{{\mathbb P}}
\def\HH{{\mathbb H}}
\def\ra{\rightarrow}
\def\L{{\mathbb L}}
\def\ds{\displaystyle}
\def\k{\Bbbk}
\colorlet{myGray}{gray!25}
\tikzset{my circle/.style={circle,draw=black,fill=myGray,inner
    sep=0pt,minimum size=6pt}} \tikzset{my square/.style={regular
    polygon,regular polygon sides=4,draw=black,fill=myGray,inner
    sep=0pt,minimum size=9pt}} \tikzset{my star/.style={star,star
    point ratio=2.5,draw=black,fill=myGray,inner sep=0pt,minimum
    size=9pt}} \tikzset{my triangle/.style={regular polygon,regular
    polygon sides=3,draw=black,fill=myGray,inner sep=0pt,minimum
    size=9pt}} \tikzset{my
  kite/.style={diamond,aspect=0.25,draw=black,fill=myGray,inner
    sep=1pt,minimum size=6pt}}
\tikzset{every pin/.style={pin distance=3pt,inner sep=1pt,font=\tiny}}
\tikzset{every pin edge/.style={semithick}}
\renewcommand{\geq}{\geqslant}
\renewcommand{\leq}{\leqslant}
\begin{document}


\title{Distinguishing $\k$-configurations} 
\thanks{Last updated: \today}

\author[F. Galetto]{Federico Galetto}
\address{Department of Mathematics and Statistics\\
  McMaster University, Hamilton, ON, L8S 4L8}
\email{galettof@math.mcmaster.ca}

\author[Y.S. SHIN]{Yong-Su Shin} \address{Department of Mathematics,
  Sungshin Women's University, Seoul, Korea, 136-742}
\email{ysshin@sungshin.ac.kr }

\author[A. Van Tuyl]{Adam Van Tuyl}
\address{Department of Mathematics and Statistics\\
  McMaster University, Hamilton, ON, L8S 4L8}
\email{vantuyl@math.mcmaster.ca}

\dedicatory{Dedicated to the memory of A.V. Geramita}

\keywords{$\k$-configurations, points, projective space}
\subjclass[2010]{13D40, 14M05}

\begin{abstract}   A $\k$-configuration is a set of points 
$\X$ in $\mathbb{P}^2$ that satisfies a number of geometric conditions.
Associated to a $\k$-configuration is 
a sequence $(d_1,\ldots,d_s)$ of positive integers,
called its type, which encodes many of its homological invariants.
We distinguish $\k$-configurations
by counting the number of lines that contain $d_s$ points of
$\X$.  In particular, we show that for all integers $m \gg 0$,
the number of such lines 
is precisely the value of $\Delta \H_{m\X}(m d_s -1)$. Here,
$\Delta \H_{m\X}(-)$ is the 
first difference of the Hilbert function of
the fat points of multiplicity $m$ supported on $\X$.  
\end{abstract}

\maketitle


\section{Introduction}\label{sec:intro}

In the late 1980's, Roberts and Roitman \cite{RR} introduced
special configurations of points in $\mathbb{P}^2$ which they named
$\k$-configurations.  We recall this definition:

\begin{defn}\label{kdefn}
  A $\k$-configuration of points in $\mathbb{P}^2$ is a finite set
  $\mathbb{X}$ of points in $\mathbb{P}^2$ which satisfies the following
  conditions: there exist integers $1\leqslant d_1 < \cdots < d_s$,
  subsets $\mathbb{X}_1, \ldots, \mathbb{X}_s$ of $\mathbb{X}$, and
  distinct lines
  $\mathbb{L}_1, \ldots, \mathbb{L}_s \subseteq \mathbb{P}^2$ such
  that:
  \begin{enumerate}
  \item $\mathbb{X} = \bigcup_{i=1}^s \mathbb{X}_i$;
  \item $|\mathbb{X}_i| = d_i$ and
    $\mathbb{X}_i \subseteq \mathbb{L}_i$ for each $i=1,\ldots,s$,
    and;
  \item $\mathbb{L}_i$ ($1< i \leqslant s$) does not contain any
    points of $\mathbb{X}_j$ for all $1\leq j<i$.
  \end{enumerate}
In this case, the $\k$-configuration is said to be
of type $(d_1,\ldots,d_s)$.
\end{defn}

\noindent
This definition was first extended to $\mathbb{P}^3$ by Harima \cite{H},
and later to all $\mathbb{P}^n$ by Geramita, Harima, and
Shin (see \cite{GHS:1,GHS:5}).   As shown by Roberts and
Roitman \cite[Theorem 1.2]{RR}, all $\k$-configurations
of type $(d_1,\ldots,d_s)$ have the same Hilbert function (which can
be computed from the type).  This result was later generalized by
Geramita, Harima, and Shin \cite[Corollary 3.7]{GHS:2} to show that all the
graded Betti numbers of the associated graded ideal $I_\X$ only depend
upon the type.

Interestingly, $\k$-configurations of the same type can have very
different geometric properties.  Figure \ref{fig:kconfig-123} shows
various examples of $\k$-configurations of type $(1,2,3)$. Note that 
the different shapes correspond to different sets of $\X_i$, i.e.,
the star is the point of $\X_1$, the squares are the two points of $\X_2$,
and the circles are the three points $\X_3$.
\begin{figure}[ht]
  \centering
  \begin{tikzpicture}[scale=0.35]
  \node at (-2,6) {$\L_1$};
  \node at (-2,0) {$\L_2$};
  \node at (-1,-2) {$\L_3$};
  \clip (-1,-1) rectangle (10,8);
  \draw (-1,0)--(10,0);
  \draw (-1,-1)--(10,10);
  \draw (-2,6)--(13,-3); 
  \draw[dashed] (2,-6)--(8,12); 
  \node[my circle] at (0,0) {};
  \node[my circle] at (3,3) {};
  \node[my circle] at (6,6) {};
  \node[my square] at (4,0) {};
  \node[my square] at (8,0) {};
  \node[my star] at (4.66,2) {};
\end{tikzpicture}\qquad\qquad
  \begin{tikzpicture}[scale=0.35]
  \node at (-2,6) {$\L_1$};
  \node at (-2,0) {$\L_2$};
  \node at (-1,-2) {$\L_3$};
  \clip (-1,-1) rectangle (10,8);
  \draw (-1,0)--(10,0);
  \draw (-1,-1)--(10,10);
  \draw (-2,6)--(13,-3); 
  \draw[dashed] (2,-6)--(8,12); 
  \node[my circle] at (0,0) {};
  \node[my circle] at (3,3) {};
  \node[my circle] at (6,6) {};
  \node[my square] at (4,0) {};
  \node[my square] at (8,0) {};
  \node[my star] at (5.5,1.5) {};
\end{tikzpicture}
\bigskip{}

  \begin{tikzpicture}[scale=0.35]
  \node at (-2,6) {$\L_1$};
  \node at (-2,0) {$\L_2$};
  \node at (-1,-2) {$\L_3$};
  \clip (-1,-1) rectangle (10,8);
  \draw (-1,0)--(10,0);
  \draw (-1,-1)--(10,10);
  \draw (-2,6)--(13,-3); 
  \draw[dashed] (2,-6)--(8,12); 
  \node[my circle] at (0,0) {};
  \node[my circle] at (3,3) {};
  \node[my circle] at (6,6) {};
  \node[my square] at (4,0) {};
  \node[my square] at (6.5,0) {};
  \node[my star] at (5.5,1.5) {};
\end{tikzpicture}\qquad\qquad
  \begin{tikzpicture}[scale=0.35]
  \node at (-2,6) {$\L_1$};
  \node at (-2,0) {$\L_2$};
  \node at (-1,-2) {$\L_3$};
  \clip (-1,-1) rectangle (10,8);
  \draw (-1,0)--(10,0);
  \draw (-1,-1)--(10,10);
  \draw (-2,6)--(13,-3); 
  \draw[dashed] (2,-6)--(8,12); 
  \node[my circle] at (1,1) {};
  \node[my circle] at (3,3) {};
  \node[my circle] at (6,6) {};
  \node[my square] at (4,0) {};
  \node[my square] at (6.5,0) {};
  \node[my star] at (5.5,1.5) {};
\end{tikzpicture}

\caption{Four different $\k$-configurations of type (1,2,3).}
\label{fig:kconfig-123}
\end{figure}
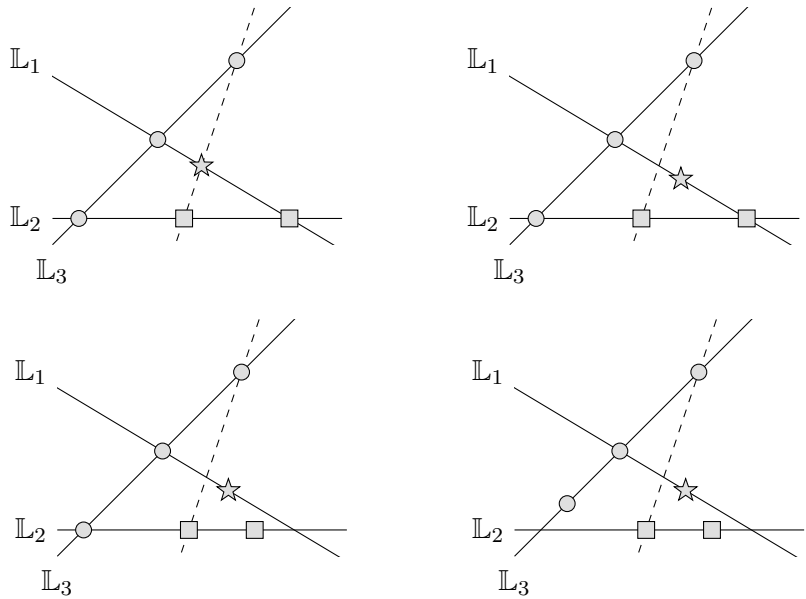
From a geometric point-of-view, these configurations
are all qualitatively different in that the number of lines containing 
$3$ points in each configuration is different (e.g., 
there are four lines that contain $3$ points of $\X$ in the first
configuration, but only one such line in the last configuration). However,
from an algebraic point-of-view, because these sets of points
are all $\k$-configurations of type $(1,2,3)$, the graded resolutions
(and consequently, the Hilbert functions) of these sets of points
are all the same.  So the algebra does not ``see'' these lines, and
so we cannot distinguish these $\k$-configurations.

Our goal in this paper is to determine how one
can distinguish these $\k$-configurations
from an algebraic point-of-view.  In particular, we wish to distinguish
$\k$-configurations by the number of lines that contain $d_s$
points of $\X$.  
It can be shown (see Remark \ref{hilbertfcnreducedbnd})
that the first difference function of the Hilbert function of $\X$
produces only an upper bound on the number of lines.  We 
show that one can obtain an exact value if one instead 
considers the Hilbert function
of the set of fat
points supported on the $\k$-configuration.  Precisely, we prove:

\begin{thm}\label{maintheorem}
  Let $\X \subseteq \mathbb{P}^2$ be a $\k$-configuration of type
  $d=(d_1,\dots,d_s) \neq (1)$.   Then there exists an integer $m_0$ such
that for all $m \geq m_0$,
\[\Delta \H_{m\X}(md_s-1)  = \mbox{number
      of lines containing exactly $d_s$ points of $\X$,}\]
where $\Delta \H_{m\X}(-)$ is the first difference function of the Hilbert
function of fat points of multiplicity $m$ supported on $\X$.  Furthermore,
if $d_s > s$, then $m_0 = 2$, and if $d_s = s$, then $m_0 = s+1$.
\end{thm}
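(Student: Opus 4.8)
The plan is to read $\Delta\H_{m\X}$ as an $h$-vector and to locate the claimed value at the very top of it. Writing $R=\k[x_0,x_1,x_2]$ for the coordinate ring of $\mathbb{P}^2$, every zero-dimensional subscheme of $\mathbb{P}^2$ is arithmetically Cohen--Macaulay, so $R/I_{m\X}$ is Cohen--Macaulay of dimension one; a general linear form $\ell$ is then a nonzerodivisor and $\Delta\H_{m\X}$ is exactly the Hilbert function of the Artinian reduction $R/(I_{m\X}+(\ell))$, i.e.\ the $h$-vector of $m\X$. Writing $\la$ for the number of lines meeting $\X$ in exactly $d_s$ points, the assertion becomes: the $h$-vector of $m\X$ is supported in degrees $0,\dots,md_s-1$, so that $\reg(m\X)=md_s-1$, and its top entry (at $md_s-1$) equals $\la$. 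I would first pin down the regularity statement, since for $m\gg0$ the top line forces $\reg(m\X)=md_s-1$ and hence $\Delta\H_{m\X}(t)=0$ for $t\geq md_s$.

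The engine is residuation along a line. If $\ell$ cuts out a line $L$ and $Z$ is any fat-point scheme supported on $\X$, the exact sequence
\[ 0 \ra (R/(I_Z:\ell))(-1) \stackrel{\cdot\ell}{\ra} R/I_Z \ra R/(I_Z+(\ell)) \ra 0 \]
gives, after differencing Hilbert functions,
\[ \Delta\H_Z(t) = \Delta\H_{Z'}(t-1) + \Delta\H_{Z\cap L}(t), \]
where $Z'$ is the residual (lower each multiplicity by one at the points of $Z$ on $L$) and $Z\cap L$ is the trace of $Z$ on $L$. When $L$ is a maximal line and $Z=m\X$, the trace $Z\cap L$ is $d_s$ fat points of multiplicity $m$ on $L\cong\mathbb{P}^1$, a scheme of degree $md_s$ whose $h$-vector is $(1,1,\dots,1)$ of length $md_s$; thus $\Delta\H_{Z\cap L}(md_s-1)=1$. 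So each maximal line is poised to contribute exactly $1$ to the top entry.

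The strategy is then to account for a $+1$ from each of the $\la$ maximal lines and to show that everything else contributes $0$ at the relevant top degree. One may either peel the maximal lines $L_1,\dots,L_\la$ one at a time (the evaluation degree dropping by one at each step) or strip them all at once using $F=\prod_i\ell_i$ and the splitting into the trace on $\bigcup_i L_i$ and the full residual. Either way the point is a \emph{gap estimate}: a line other than the maximal ones meets $\X$ in at most $d_s-1$ points, so its trace in $m\X$ (even after multiplicity reductions) has degree at most about $m(d_s-1)$, a full additive $m$ below $md_s$; consequently such traces, and the final residual fat-point scheme, have regularity strictly below the top degree and drop out of the difference. This gap is precisely what the reduced case ($m=1$) lacks, which is why Remark \ref{hilbertfcnreducedbnd} only yields an upper bound.

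The hard part is making these vanishing statements quantitative, and it is here that the threshold $m_0$ is determined. Concretely, I expect the main obstacle to be the bookkeeping at the intersection points $L_i\cap L_j$ of maximal lines (each lowering some trace degree by one) together with the regularity bound on the final residual scheme that certifies $\Delta\H_{\mathrm{res}}=0$ in the range $md_s-1-\la$; the formal additive telescoping is comparatively routine once these estimates are in place. The case split reflects how tight the configuration is: when $d_s>s$ every maximal line is one of the $\L_i$ (a line off the $\L_i$ meets $\X$ in at most $s<d_s$ points), the gap is generous, and $m=2$ already suffices, giving $m_0=2$; when $d_s=s$, i.e.\ the type is $(1,2,\dots,s)$, transversal lines meeting every $\L_i$ in a point of $\X$ can also carry $d_s$ points, the configuration is maximally tight, and controlling the regularity of the residual forces $m\geq s+1$, giving $m_0=s+1$.
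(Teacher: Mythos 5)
Your overall strategy --- residuating along lines and telescoping the resulting exact sequences, with each maximal line contributing $1$ at the top degree and everything else vanishing there --- is the same engine the paper runs: it is precisely the Cooper--Harbourne--Teitler reduction-vector procedure used in Theorems \ref{bigmult}, \ref{case=s+1}, and \ref{case=s}. But there are two genuine gaps. First, the identity $\Delta\H_Z(t)=\Delta\H_{Z'}(t-1)+\Delta\H_{Z\cap L}(t)$ is not correct as stated: the exact sequence produces $R/(I_Z+(\ell))$, which only surjects onto the coordinate ring of the schematic trace $Z\cap L$, so $\H_{R/(I_Z+(\ell))}(t)\geq \H_{Z\cap L}(t)$ and the telescoping yields only the lower bound $f_{\mathbf v}$. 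You never supply the matching upper bound; in the paper this is the separate function $F_{\mathbf v}$ of \cite{CHT}, evaluated at a well-chosen index, together with the observation that $\H_{m\X}(md_s-1)=\deg(m\X)$. Without that, the step ``each maximal line contributes exactly $1$'' does not close.

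Second, and more seriously, the case $d_s=s$ with $1\leq r<s$ maximal lines cannot be handled by line-peeling plus a gap estimate at all. The example of Figure \ref{specialcase} (type $(1,2,3)$, $r=1$) has $\Delta\H_{2\X}(5)=3$ even though ${\rm ri}(2\X)=5$: the residual does \emph{not} drop out of the difference at the top degree for small $m$, and the obstruction is not the regularity of the residual but the failure of $\H_{m\X}(md_s-2)$ to reach $\deg(m\X)-r$. The paper abandons residuation here entirely: it writes $\X=\Y\cup\{P\}$ with $P$ the point of $\X_1$, compares $\H_{m\X}$ with $\H_{m\Y}+\H_{mP}$ via the ideal-theoretic Mayer--Vietoris sequence, and proves $\H_{R/(I_{m\Y}+I_P^m)}(ms-2)=0$ for $m\geq s+1$ by an explicit construction of forms in $I_{m\Y}$ using Lemma \ref{CTVlemma} of Catalisano--Trung--Valla. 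That construction is where the threshold $m_0=s+1$ actually comes from, and nothing in your sketch substitutes for it. (Your attribution of the threshold to transversal lines carrying $d_s$ points is also off: in the subcases where such extra lines do occur --- exactly $s+1$ or exactly $s$ maximal lines --- the paper proves the result already for all $m\geq 2$.)
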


\noindent
In other words, the number of lines that contain $d_s$ points of $\X$
is encoded in the Hilbert function of fat points supported on $\X$.
This provides us with an algebraic method to differentiate
$\k$-configurations.  Note we exclude the $\k$-configuration of type
$d=(1)$ since $\X$ is a single point, and there are an infinite number
of lines through this point.  Thematically, this paper is similar to
works of Bigatti, Geramita, and Migliore \cite{BGM}, and Chiantini and Migliore \cite{CM} which derived
geometric consequences about points from the Hilbert function.

We now give an outline of the paper.  In Section 2, we define all the
relevant terminology involving $\k$-configurations, and  some properties
of $\k$-configurations.  We also recall a
procedure to bound values of the Hilbert function of a set of fat
points due to Cooper, Harbourne, and Teitler \cite{CHT}, which will be our
main tool. In Section 3, we focus on the case $d_s > s$
and prove Theorem \ref{maintheorem} in this case. In Section 4,
we focus on the case that $d_s = s$.  A more subtle argument 
is needed to prove Theorem \ref{maintheorem} since an extra line
may come into play.   We will also require a result of
Catalisano, Trung, and Valla \cite{CTV} to complete this case.
In the final section, we give a reformulation of Theorem \ref{maintheorem}, and make a connection to a question of Geramita, Migliore, and Sabourin \cite{GMS} on the number of Hilbert functions of fat points whose support has a fixed Hilbert function.


\section{Background results}\label{sec:background} 

This section collects the necessary background results.
We first review Hilbert functions and ideals of (fat) points 
in $\mathbb{P}^2$. We then introduce a number of lemmas 
describing $\k$-configurations.
Throughout the remainder of this paper, $R = \k[x_0,x_1,x_2]$ is
a polynomial ring over an algebraically closed field $\k$. 

\subsection{Points in \texorpdfstring{$\mathbb{P}^2$}{P2} and Hilbert functions}
We recall some general facts about (fat) points
in $\mathbb{P}^2$ and their Hilbert functions.   These results will
be used later in our study of $\k$-configurations.

Let $\X = \{P_1,\ldots,P_s\}$ be a set of distinct points in
$\mathbb{P}^2$.  If $I_{P_i}$ is the ideal associated to $P_i$ in $R =
\k[x_0,x_1,x_2]$, then the homogeneous ideal associated to
$\X$ is the ideal $I_\X = I_{P_1} \cap \cdots \cap I_{P_s}$. 
Given $s$ positive integers $m_1,\ldots,m_s$ (not necessarily distinct),
the scheme defined by the ideal 
{
$I_\Z = I_{P_1}^{m_1} \cap \cdots \cap I_{P_s}^{m_s}$} is called a set of 
{\it fat points}.   We say that $m_i$ is the {\it  multiplicity}
of the point $P_i$.  If $m_1 = \cdots = m_s = m$, then we say
$\Z$ is a {\it homogeneous set of fat points} of multiplicity $m$.  In this
case, we normally write $m\X$ for $\Z$, and $I_{m\X}$ for $I_\Z$.

Note that it can be shown that $I_{m\X} = I_{\X}^{(m)}$, the $m$-th
symbolic power of the ideal $I_\X$.  If $\X = \{P\}$, then we
sometimes write $I_{mP}$ for $I_{m\X}$.  As well, since $I_P$ is a
complete intersection, it follows that $I_{mP} = I_P^{(m)} = I_P^m$
(see Zariski-Samuel \cite[Appendix 6, Lemma 5]{ZS}).

An ongoing problem at the intersection of commutative algebra and
algebraic geometry is to study and classify the Hilbert functions
arising from the homogeneous ideals of sets of fat points.  Recall
that if $I \subseteq R = \k[x_0,x_1,x_2]$ is any homogeneous ideal,
then the {\it Hilbert function} of $R/I$, denoted $\H_{R/I}$, is the
numerical function $\H_{R/I}:\mathbb{N} \rightarrow \mathbb{N}$ defined
by
\[\H_{R/I}(t) := \dim_\k R_t - \dim_\k I_t\]
where $R_t$, respectively $I_t$, denotes the $t$-th graded component
of $R$, respectively $I$.  If $I = I_\Z$ is the defining ideal of a
set of (fat) points $\Z$, then we usually write $\H_{\Z}$ for
$\H_{R/I_\Z}$.  The {\it first difference} of the Hilbert function
$\H_{R/I}$, is the function
\[\Delta \H_{R/I}(t) := \H_{R/I}(t) - \H_{R/I}(t-1) ~~\mbox{for all
    $t \geq 0$} ~~\mbox{ where $\H_{R/I}(t) = 0$ for all $t < 0$.}\]

Given a set of points $\Z \subseteq \mathbb{P}^2$, Cooper, Harbourne,
and Teitler \cite{CHT} described a procedure by which one
can find both upper and lower bounds on $\H_\Z(t)$ for all $t \geq 0$.
This procedure, which we describe below, will be instrumental in the
proof of our main results.

Let $\Z=\Z_0$ be a fat point subscheme of
$\mathbb{P}^2$. Choose a sequence of lines $\L_1,\dots,\L_r$ and define
$\Z_i$ to be the residual of $\Z_{i-1}$ with respect to the line
$\L_i$
(i.e. the subscheme of $\mathbb{P}^2$ defined by the ideal $I_{\mathbb{Z}_i} : I_{\L_i}$).   Define the associated {\it reduction vector}
$\mathbf{v} = (v_1,\dots,v_r)$ by taking
$v_i = \deg (\L_i \cap \Z_{i-1})$. In particular, $v_i$ is the sum
of the multiplicities of the points in $\L_i \cap \Z_{i-1}$.
Given $\mathbf{v}=(v_1,\dots,v_r)$, we define
functions
\begin{equation}\label{lowerbound}
  f_{\mathbf{v}} (t) = \sum_{i=0}^{r-1} \min (t-i+1,v_{i+1})
\end{equation}
and
\begin{equation}\label{upperbound}
  F_{\mathbf{v}} (t) = \min_{0\leq i\leq r} \bigg(
    \binom{t+2}{2} - \binom{t-i+2}{2} + \sum_{j=i+1}^{r} v_j
  \bigg).
\end{equation}

\begin{thm}[Cooper-Harbourne-Teitler {\cite[Theorem 1.1]{CHT}}]\label{CHT}
  Let $\Z=\Z_0$ be a fat point scheme in $\mathbb{P}^2$ with reduction
  vector $\mathbf{v} = (v_1,\dots,v_r)$ such that
  $\Z_{r+1} = \varnothing$. Then the Hilbert function $\H_\Z (t)$ of $\Z$ is
  bounded by
  $f_{\mathbf{v}} (t) \leq \H_\Z (t) \leq F_{\mathbf{v}} (t)$.
\end{thm}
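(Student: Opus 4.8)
The plan is to prove both inequalities simultaneously from a single identity obtained by iterating the residuation exact sequence. Let $\ell_i$ be a linear form defining $\L_i$, so that $I_{\L_i}=(\ell_i)$ and, by definition of the residual, $I_{\Z_i}=I_{\Z_{i-1}}:(\ell_i)$. Multiplication by $\ell_i$ then yields a short exact sequence of graded $R$-modules
\[
0 \to (R/I_{\Z_i})(-1) \xrightarrow{\ \cdot\, \ell_i\ } R/I_{\Z_{i-1}} \to R/(I_{\Z_{i-1}} + (\ell_i)) \to 0,
\]
where injectivity of the left-hand map is exactly the defining property of the colon ideal (if $\ell_i g\in I_{\Z_{i-1}}$ then $g\in I_{\Z_{i-1}}:(\ell_i)=I_{\Z_i}$), and the cokernel is the coordinate ring of the trace $\Z_{i-1}\cap\L_i$. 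Taking dimensions in degree $t$ and writing $\H_{\Z_{i-1}\cap\L_i}(t):=\dim_\k (R/(I_{\Z_{i-1}}+(\ell_i)))_t$, additivity of Hilbert functions on the sequence gives the recursion $\H_{\Z_{i-1}}(t)=\H_{\Z_i}(t-1)+\H_{\Z_{i-1}\cap\L_i}(t)$.

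First I would record two elementary bounds on the trace term. The trace $\Z_{i-1}\cap\L_i$ is a zero-dimensional subscheme of $\L_i\cong\mathbb{P}^1$ of degree $v_i=\deg(\L_i\cap\Z_{i-1})$. Since $R/(I_{\Z_{i-1}}+(\ell_i))$ is a quotient of $R/(\ell_i)$, a polynomial ring in two variables, its degree-$t$ piece has dimension at most $t+1$. For the matching lower bound, recall that the Hilbert function of a zero-dimensional subscheme of $\mathbb{P}^1$ of degree $v_i$ equals $\min(t+1,v_i)$, because its saturated ideal in two variables is principal of degree $v_i$; since passing to the saturation can only enlarge the ideal, and hence only decrease the ideal's degree-$t$ dimension, we obtain $\min(t+1,v_i)\le \H_{\Z_{i-1}\cap\L_i}(t)\le t+1$.

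Next I would iterate the recursion. Telescoping down through $\Z_0,\Z_1,\dots$ until the scheme is exhausted (the termination hypothesis guarantees the process reaches the empty scheme, so the trailing term eventually vanishes) produces, for each $0\le i\le r$, the identity $\H_{\Z_0}(t)=\sum_{k=0}^{i-1}\H_{\Z_k\cap\L_{k+1}}(t-k)+\H_{\Z_i}(t-i)$. For the lower bound I would run this to the end, replace each trace term by its lower estimate $\min(t-k+1,v_{k+1})$, and discard the final nonnegative remainder; the surviving sum is precisely $f_{\mathbf v}(t)$ from \eqref{lowerbound}. For the upper bound I would stop at an arbitrary $i$, bound each of the first $i$ trace terms above by $t-k+1$ (so that $\sum_{k=0}^{i-1}(t-k+1)=\binom{t+2}{2}-\binom{t-i+2}{2}$ by a routine binomial identity), and bound the zero-dimensional remainder $\H_{\Z_i}(t-i)$ by its degree $\deg \Z_i=\sum_{j=i+1}^{r}v_j$; minimizing over $i$ yields $F_{\mathbf v}(t)$ from \eqref{upperbound}.

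The routine parts are the telescoping and the binomial identity; the step requiring the most care is the double bound on the trace Hilbert function together with the degree bookkeeping. One must verify that $v_i=\deg(\L_i\cap\Z_{i-1})$ is genuinely the degree of the trace subscheme on $\mathbb{P}^1$ (so the estimate $\min(t+1,v_i)$ applies) and that each residuation decreases the total degree by exactly $v_i$, so the terminal scheme is empty and $\deg \Z_i=\sum_{j>i}v_j$. The possible non-saturation of the raw ideal $I_{\Z_{i-1}}+(\ell_i)$ is harmless: shrinking the ideal only pushes the trace dimension up, which is consistent with the lower estimate and never violates the crude upper estimate $t+1$, so both inequalities survive.
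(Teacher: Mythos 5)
Your argument is correct, but there is nothing in the paper to compare it against: the paper states this result as a quotation of \cite[Theorem 1.1]{CHT} and gives no proof, using it as a black box. What you have written is essentially a faithful reconstruction of the original Cooper--Harbourne--Teitler argument, which likewise iterates the residuation exact sequence $0 \to (R/I_{\Z_i})(-1) \xrightarrow{\cdot \ell_i} R/I_{\Z_{i-1}} \to R/(I_{\Z_{i-1}}+(\ell_i)) \to 0$, telescopes, and sandwiches each trace term between $\min(t+1,v_i)$ and $t+1$. Your treatment of the two delicate points is right: saturating $I_{\Z_{i-1}}+(\ell_i)$ only enlarges the ideal, so the crude quotient dominates the saturated Hilbert function $\min(t+1,v_i)$ of a degree-$v_i$ divisor on $\P^1$ (whose saturated ideal is indeed principal of degree $v_i$); and the degree bookkeeping $\deg \Z_i = \deg \Z_{i-1} - v_i$, hence $\deg \Z_i = \sum_{j>i} v_j$, follows for fat points from the local computations $I_P^m : (\ell) = I_P^{m-1}$ and $\k[x,y]/\left((x,y)^m + (x)\right) \cong \k[y]/(y^m)$, which has length $m$. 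Two minor bookkeeping remarks. First, since $F_{\mathbf v}(t)$ is a minimum over \emph{all} $0 \leq i \leq r$, you need the upper estimate for every $i$, including $i > t+1$, where some per-line bounds $t-k+1$ are negative; there one should instead use that the trace contribution in degree $t-k$ vanishes for $k > t$, together with the convention $\binom{n}{2}=0$ for $n \leq 1$, after which the stated formula holds trivially because the right-hand side is then at least $\binom{t+2}{2} \geq \H_\Z(t)$. Second, the hypothesis ``$\Z_{r+1} = \varnothing$'' in the paper's statement is an indexing slip (under its $\Z_0$-based convention the terminal condition after $r$ lines is $\Z_r = \varnothing$, as its own Example \ref{ex:CHT} implicitly confirms); your phrase ``until the scheme is exhausted'' handles this at the correct level, since the lower bound needs the trailing remainder to vanish and the upper bound needs $\deg \Z_i = \sum_{j>i} v_j$.
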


\begin{exmp}
  \label{ex:CHT}
  Let $\X$ be the $\k$-configuration of type $(1,3,4,5)$ in Figure
  \ref{fig:CHT}. We illustrate how to use Theorem \ref{CHT} to compute
  $\H_{2\X} (8)$.  We take $\Z = 2\X$, i.e.,\ we assume that each point
  has multiplicity two; this is indicated by the $2$ by each point.

  \begin{figure}[ht]
    \centering

\begin{tikzpicture}[scale=0.36]
  \node at (-2,6) {$\L_4$}; \node at (-2,0) {$\L_3$}; \node at (-1,-2)
  {$\L_2$}; \node at (4,-2) {$\L_1$};
  \clip (-1,-1.2) rectangle (10,8);
  \draw (-1,0)--(10,0); \draw (-1,-1)--(10,10); \draw
  (-2,6)--(13,-3); 
  \draw (2,-6)--(8,12); 
  \node[my circle] at (8,0) [pin={below:2}] {}; \node[my circle] at
  (6.33,1) [pin={above right:2}] {}; \node[my circle] at (3,3)
  [pin={above:2}] {}; \node[my circle] at (1.33,4) [pin={above:2}] {};
  \node[my circle] at (-0.33,5) [pin={above:2}] {}; \node[my square]
  at (0,0) [pin={below:2}] {}; \node[my square] at (1.75,0)
  [pin={below:2}] {}; \node[my square] at (3.5,0) [pin={above:2}] {};
  \node[my square] at (5.5,0) [pin={below:2}] {}; \node[my triangle]
  at (1.5,1.5) [pin={above:2}] {}; \node[my triangle] at (4.5,4.5)
  [pin={above:2}] {}; \node[my triangle] at (6,6) [pin={[pin
    distance=4pt]right:2}] {}; \node[my star] at (5,3) [pin={[pin
    distance=4pt]right:2}] {};
\end{tikzpicture}
\quad
\begin{tikzpicture}[scale=0.36]
  \node at (-2,6) {$\L_4$}; \node at (-2,0) {$\L_3$}; \node at (-1,-2)
  {$\L_2$}; \node at (4,-2) {$\L_1$};
  \clip (-1,-1.2) rectangle (10,8);
  \draw (-1,0)--(10,0); \draw (-1,-1)--(10,10); \draw
  (-2,6)--(13,-3); 
  \draw (2,-6)--(8,12); 
  \node[my circle] at (8,0) [pin={below:1}] {}; \node[my circle] at
  (6.33,1) [pin={above right:1}] {}; \node[my circle] at (3,3)
  [pin={above:1}] {}; \node[my circle] at (1.33,4) [pin={above:1}] {};
  \node[my circle] at (-0.33,5) [pin={above:1}] {}; \node[my square]
  at (0,0) [pin={below:2}] {}; \node[my square] at (1.75,0)
  [pin={below:2}] {}; \node[my square] at (3.5,0) [pin={above:2}] {};
  \node[my square] at (5.5,0) [pin={below:2}] {}; \node[my triangle]
  at (1.5,1.5) [pin={above:2}] {}; \node[my triangle] at (4.5,4.5)
  [pin={above:2}] {}; \node[my triangle] at (6,6) [pin={[pin
    distance=4pt]right:2}] {}; \node[my star] at (5,3) [pin={[pin
    distance=4pt]right:2}] {};
\end{tikzpicture}
\quad
\begin{tikzpicture}[scale=0.36]
  \node at (-2,6) {$\L_4$}; \node at (-2,0) {$\L_3$}; \node at (-1,-2)
  {$\L_2$}; \node at (4,-2) {$\L_1$};
  \clip (-1,-1.2) rectangle (10,8);
  \draw (-1,0)--(10,0); \draw (-1,-1)--(10,10); \draw
  (-2,6)--(13,-3); 
  \draw (2,-6)--(8,12); 
  \node[my circle] at (8,0) [pin={below:0}] {}; \node[my circle] at
  (6.33,1) [pin={above right:1}] {}; \node[my circle] at (3,3)
  [pin={above:1}] {}; \node[my circle] at (1.33,4) [pin={above:1}] {};
  \node[my circle] at (-0.33,5) [pin={above:1}] {}; \node[my square]
  at (0,0) [pin={below:1}] {}; \node[my square] at (1.75,0)
  [pin={below:1}] {}; \node[my square] at (3.5,0) [pin={above:1}] {};
  \node[my square] at (5.5,0) [pin={below:1}] {}; \node[my triangle]
  at (1.5,1.5) [pin={above:2}] {}; \node[my triangle] at (4.5,4.5)
  [pin={above:2}] {}; \node[my triangle] at (6,6) [pin={[pin
    distance=4pt]right:2}] {}; \node[my star] at (5,3) [pin={[pin
    distance=4pt]right:2}] {};
\end{tikzpicture}

\caption{The Cooper-Harbourne-Teitler bound computation.}

\label{fig:CHT}
\end{figure}
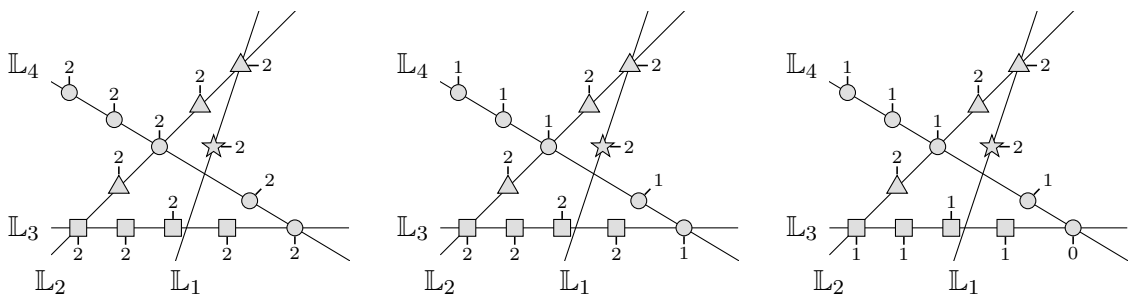

We apply Theorem \ref{CHT} using the sequence of lines
$\HH_1,\dots,\HH_{8}$, where $\HH_1=\L_4$, $\HH_2=\L_3$, $\HH_3=\L_2$,
$\HH_4=\L_1$, $\HH_5=\L_4$, $\HH_6=\L_3$, $\HH_7=\L_2$, and
$\HH_8=\L_1$.  The reduction vector is ${\bf v}=(10,9,8,3,3,3,2,1)$.
To see this, note that $\Z_0 = 2\X$, so $\HH_1 \cap \Z_0$ consists of
the five double points on $\L_4$, so $\deg(\HH_1 \cap \Z_0) = 10$.  We
form $\Z_1$ from $\Z_0$ by reducing the multiplicity of each point on
$\HH_1 = \L_4$ by one.  Then $\HH_2 \cap \Z_1 = \L_3 \cap \Z_1$
consists of 4 double points and one reduced point, so
$\deg(\HH_2 \cap \Z_1) = 2\cdot 4 + 1 = 9$.  Figure \ref{fig:CHT}
illustrates the first two steps of this procedure. Continuing in this
fashion allows us to compute ${\bf v}$, ending when we reach
$\Z_9 = \varnothing$.

To compute the lower bound $f_{\bf v} (8)$  using
Equation \eqref{lowerbound}, we compare the values of $8-i+1$ and
$v_{i+1}$ in the table below (the minimum is in bold).

  \begin{center}
    \begin{tabular}{|c|c|c|c|c|c|c|c|c|c|c|}
      \hline 
      $i$ &   0 & 1 & 2 & 3 & 4 & 5 & 6 & 7  \\ \hline \hline
      $8-i+1$ & \bf 9 & \bf 8 & \bf 7 & 6 & 5 & 4 & 3 & 2 \\ \hline 
      $v_{i+1}$ & 10 & 9 & 8 & \bf 3 & \bf 3 & \bf 3 & \bf 2 & \bf 1 \\ \hline
    \end{tabular}
  \end{center}

Adding up the minimum values gives
$
f_{\bf v}(8)=36.
$
To compute an upper bound, we take $i=3$ in
Equation~\eqref{upperbound}; then we have
$$
F_{\bf v}(8)\leq \binom{8+2}{2}-\binom{5+2}{2}+\sum_{j=4}^8 v_j=36.
$$
This implies that
$
f_{\bf v}(8)=F_{\bf v}(8)=\H_{2\X}(8)=36.
$
\end{exmp}

\begin{rem}
  In Example \ref{ex:CHT}, we have used the procedure of
  \cite{CHT} to find actual values of the Hilbert
  function.  In general, however, one can only expect to find bounds.
\end{rem}

\subsection{Properties of \texorpdfstring{$\k$}{k}-configurations} 
In this section, we record a number of useful facts about $\k$-configurations.

\begin{lem}\label{bounds}
Suppose that $\X \subseteq \mathbb{P}^2$ is a 
$\k$-configuration of type $(d_1,\ldots,d_s)$.  Then
\begin{enumerate}
\item[$(i)$]  $d_j \leq d_s - s + j$ for $j=1,\ldots,s$;
\item[$(ii)$] if $d_s =s$, then $(d_1,\ldots,d_s) = (1,\ldots,s)$;
\item[$(iii)$] for any line $\L$ in $\mathbb{P}^2$, 
$|\L \cap \X| \leq d_s.$
\end{enumerate}
\end{lem}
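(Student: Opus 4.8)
The plan is to dispatch the three parts in order, treating (i) and (ii) as short consequences of the strict inequalities $d_1 < \cdots < d_s$ and reserving the genuinely geometric argument for (iii). The one external fact I would use throughout (iii) is that two distinct lines in $\mathbb{P}^2$ meet in exactly one point.

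First I would prove (i). Since the $d_i$ are strictly increasing integers, every step raises the value by at least one, so for $j \leq s$ one has $d_s - d_j = \sum_{k=j}^{s-1}(d_{k+1} - d_k) \geq s - j$; rearranging gives $d_j \leq d_s - s + j$. For (ii), suppose $d_s = s$. Substituting into (i) gives $d_j \leq j$ for every $j$, while starting from $d_1 \geq 1$ and using strict monotonicity yields $d_j \geq d_1 + (j-1) \geq j$. Hence $d_j = j$ for all $j$, i.e. $(d_1,\ldots,d_s) = (1,\ldots,s)$.

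The substantive part is (iii). The key observation is that for any line $\L \neq \L_i$, at most one point of $\X_i$ can lie on $\L$, namely the intersection $\L \cap \L_i$ (and only if this point happens to belong to $\X_i \subseteq \L_i$). I would split into two cases. If $\L$ is not one of the lines $\L_1,\dots,\L_s$, then $\L$ meets each of the $s$ sets $\X_i$ in at most one point, so $|\L \cap \X| \leq s \leq d_s$, the last inequality holding because $d_1 < \cdots < d_s$ forces $d_s \geq s$. If instead $\L = \L_i$ for some $i$, I would bound $|\L_i \cap \X| \leq \sum_k |\L_i \cap \X_k|$ by counting contributions by source: all $d_i$ points of $\X_i$ may lie on $\L_i$; no point of $\X_j$ with $j < i$ lies on $\L_i$ by Definition \ref{kdefn}(3); and for each $k > i$ at most one point of $\X_k$ lies on $\L_i$ by the intersection fact. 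This gives $|\L_i \cap \X| \leq d_i + (s - i)$, which by part (i) is at most $(d_s - s + i) + (s - i) = d_s$.

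The only place requiring care is the bookkeeping in the case $\L = \L_i$: one must invoke condition (3) to kill the contributions of the earlier sets $\X_j$, observe that each later set $\X_k$ contributes at most the single point $\L_i \cap \L_k$, and then see that the resulting bound $d_i + (s-i)$ collapses exactly to $d_s$ via part (i). Beyond this careful case split I expect no real obstacle, since parts (i) and (ii) are purely arithmetic and the geometric input is the elementary incidence fact already noted.
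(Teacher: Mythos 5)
Your proof is correct, but it is more self-contained than the paper's: the authors dispose of $(i)$ and $(ii)$ in one line as immediate consequences of $1 \leq d_1 < \cdots < d_s$ (exactly your arithmetic), and for $(iii)$ they simply cite Lemma~1.3 of Roberts and Roitman \cite{RR} rather than proving anything. Your argument for $(iii)$ — splitting on whether $\L$ is one of the defining lines, using the union bound $|\L \cap \X| \leq s \leq d_s$ in the first case, and in the second case killing the contributions of $\X_j$ for $j<i$ via condition (3), allowing at most one point from each $\X_k$ with $k>i$, and collapsing $d_i + (s-i) \leq d_s$ via part $(i)$ — is sound, and it is in fact the same incidence-counting technique the paper itself deploys a few lines later in the proof of Lemma \ref{necessaryconditions} (compare your first case with its part $(i)$ and your second case with its part $(ii)$). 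What your route buys is independence from the external reference; what the paper's route buys is brevity. The only point worth making explicit in your write-up is that $s \leq d_s$ follows from $d_s \geq d_1 + (s-1) \geq s$, which you do note, so there is no gap.
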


\begin{proof}
Statements $(i)$ and $(ii)$ follow directly from the definition of
$\k$-configurations since $1 \leq d_1 < d_2 < \cdots < d_s$. 
Statement $(iii)$ is \cite[Lemma 1.3]{RR}.
\end{proof}

By definition, there is at least one line $\L$ in $\mathbb{P}^2$ 
that meets a $\k$-configuration $\X$ of type $(d_1,\ldots,d_s)$ at 
$d_s$ points, namely, the line $\L_s$.  As mentioned in the introduction,
our goal is to enumerate the lines that meet $\X$ at 
exactly $d_s$ points.  We begin with some useful necessary conditions
for a line $\L$ to contain $d_s$ points.

\begin{lem}\label{necessaryconditions}
Suppose that $\X \subseteq \mathbb{P}^2$ is a 
$\k$-configuration of type $(d_1,\ldots,d_s)$, 
and $\L_1,\ldots,\L_s$ are the lines used to define $\X$.
Let $\L$ be any line in $\mathbb{P}^2$ such that $|\L \cap \X| = d_s$.
\begin{enumerate}
\item[$(i)$]  If $d_s > s$, then $\L \in \{\L_1,\ldots,\L_s\}$.
\item[$(ii)$] If $\L = \L_i$, then 
$d_j = d_s-s+j$  for $j = i,\ldots,s$.
\end{enumerate}
\end{lem}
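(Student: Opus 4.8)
The plan is to prove both parts by directly counting the points of $\X$ lying on the relevant line, using the incidence conditions in the definition of a $\k$-configuration together with the numerical bound of Lemma~\ref{bounds}$(i)$.

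For part $(i)$ I would argue by contradiction: suppose $\L \notin \{\L_1,\dots,\L_s\}$. Since $\X = \bigcup_{i=1}^s \X_i$, we have $\L \cap \X = \bigcup_{i=1}^s (\L \cap \X_i)$, and hence $|\L \cap \X| \leq \sum_{i=1}^s |\L \cap \X_i|$. For each $i$, the line $\L$ is distinct from $\L_i$, so they meet in at most one point; as $\X_i \subseteq \L_i$, this gives $|\L \cap \X_i| \leq |\L \cap \L_i| \leq 1$. Summing over $i$ yields $|\L \cap \X| \leq s < d_s$, contradicting $|\L \cap \X| = d_s$. Therefore $\L$ must be one of the defining lines.

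For part $(ii)$, assume $\L = \L_i$, so $|\L_i \cap \X| = d_s$, and count $|\L_i \cap \X|$ by splitting $\X$ into the pieces $\X_j$. By the third condition in Definition~\ref{kdefn}, $\L_i$ contains no point of $\X_j$ for $j < i$, so those pieces contribute nothing; the piece $\X_i$ lies entirely on $\L_i$ and contributes its $d_i$ points; and for $j > i$ the lines $\L_i, \L_j$ are distinct, so $|\L_i \cap \X_j| \leq |\L_i \cap \L_j| \leq 1$. Adding these contributions gives
\[
d_s = |\L_i \cap \X| \leq d_i + (s-i),
\]
i.e.\ $d_i \geq d_s - s + i$. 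Together with the reverse inequality $d_i \leq d_s - s + i$ of Lemma~\ref{bounds}$(i)$, this forces $d_i = d_s - s + i$.

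It then remains to propagate this equality to $j = i+1,\dots,s$, and here I would combine the strict chain $d_i < d_{i+1} < \cdots < d_s$ with Lemma~\ref{bounds}$(i)$. Starting from $d_i = d_s - s + i$, an immediate induction gives $d_j \geq d_{j-1} + 1 = d_s - s + j$, which meets the upper bound $d_j \leq d_s - s + j$ of Lemma~\ref{bounds}$(i)$, so $d_j = d_s - s + j$ for every $j = i,\dots,s$. The only real content lies in the incidence count of part $(ii)$: the key point is that the third condition of Definition~\ref{kdefn} forces $\L_i$ to miss all lower-indexed subsets, while distinctness of the defining lines caps its intersection with each higher-indexed subset at a single point. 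Once the bound $d_s \leq d_i + (s-i)$ is established, the rest is a short numerical squeeze against Lemma~\ref{bounds}$(i)$ with no further geometric input, so I would not expect any genuine difficulty beyond setting up that count.
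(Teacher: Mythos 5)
Your proposal is correct and follows essentially the same route as the paper's proof: part $(i)$ by the count $|\L \cap \X| \leq \sum_i |\L \cap \L_i| = s < d_s$ when $\L$ is not a defining line, and part $(ii)$ by the incidence count $d_s \leq d_i + (s-i)$ squeezed against Lemma~\ref{bounds}$(i)$, with the strictly increasing chain forcing the remaining equalities. No substantive differences.
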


\begin{proof} 
$(i)$ 
If $\L \not\in \{\L_1,\ldots,\L_s\}$, then 
$\L \cap \X \subseteq \bigcup_{i=1}^s (\L \cap \L_i)$.  So
if $s < d_s$,
$|\L \cap \X| \leq \sum_{i=1}^s |\L \cap \L_i| = s < d_s$.  In other words,
if $|\L \cap \X| = d_s$, then $\L$ must be in
$\{\L_1,\ldots,\L_s\}$.
  
$(ii)$ 
Suppose $\L = \L_i$ contains $d_s$ points of $\X$.
By definition, $\L_i$ contains the $d_i$ points of $\X_i \subseteq \X$.
Furthermore, this line cannot contain any of the points
in $\X_1,\ldots,\X_{i-1}$.   In addition, $\L_i$ can contain at
most one point of $\X_{i+1},\ldots,\X_s$.   So
$d_s = |\L_i \cap \X| \leq d_i + (s-i)$.  But by Lemma \ref{bounds},
we have $d_i + (s-i) \leq d_s$, so $d_s = d_i + (s-i)$. 
To complete the proof, note that $d_i < d_{i+1} < \cdots < d_s$ 
is a set of $s-i+1$ strictly increasing integers with 
$d_i = d_s - (s-i)$. This forces $d_j = d_s - (s-j)$ for 
all $j=i,\ldots,s$.
\end{proof}

\begin{rem} If $\X$ is a $\k$-configuration of
type $(d_1,\ldots,d_s)$ with $d_{s-1} < d_{s}-1$, the above lemma
implies that there is exactly one line containing $d_s$
points of $\X$, namely $\L_s$.
\end{rem}

If $d_s > s$, Lemma \ref{necessaryconditions} implies that the lines we want
to count are among the $\L_i$'s, and consequently,
there are at most $s$ such lines.  The next result shows that when $d_s = s$ (or
equivalently, the type is $(1,2,\ldots,s)$) the situation is more subtle.
In particular, if there is a line $\L$ that contains $s$ points that is
not among the $\L_i$'s, then it must be one of two lines.

\begin{lem} \label{extraline}
  Suppose that $\X \subseteq \mathbb{P}^2$ is a
  $\k$-configuration of type $(1,2,\ldots,s)$ with $s \geq 2$. 
  Let
  $\X_1,\ldots,\X_s$ be the subsets of $\X$; let $\L_1,\ldots,\L_s$
  be the lines used to define $\X$; let $\X_1 = \{P\}$ be the point
  on $\L_1$; and let $\X_2 = \{Q_1,Q_2\}$ be the two points on $\L_2$.
  If $\L$ is a line in $\mathbb{P}^2$ such that
  $|\L \cap \X| = d_s = s $, and if $\L \not\in \{\L_1,\ldots,\L_s\}$,
  then $\L$ must either be the line through $P$ and $Q_1$, or the line
  through $P$ and $Q_2$.
\end{lem}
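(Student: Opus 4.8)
The plan is to refine the counting argument already used in the proof of Lemma \ref{necessaryconditions}$(i)$. Since $\X$ has type $(1,2,\dots,s)$, we have $d_s = s$, and the subsets $\X_1,\dots,\X_s$ are pairwise disjoint: condition $(3)$ of Definition \ref{kdefn} gives $\X_j \cap \L_i = \varnothing$ for $j < i$, so in particular $\X_j \cap \X_i \subseteq \X_j \cap \L_i = \varnothing$. Writing $\X = \bigcup_{i=1}^s \X_i$ as a disjoint union, I would decompose $\L \cap \X = \bigsqcup_{i=1}^s (\L \cap \X_i)$.

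Next I would bound each piece. Because $\L \notin \{\L_1,\dots,\L_s\}$, the line $\L$ is distinct from every $\L_i$, and two distinct lines in $\mathbb{P}^2$ meet in a single point; since $\X_i \subseteq \L_i$, this forces $|\L \cap \X_i| \leq |\L \cap \L_i| = 1$ for each $i$. Summing over the disjoint pieces, $|\L \cap \X| = \sum_{i=1}^s |\L \cap \X_i| \leq s$. The hypothesis $|\L \cap \X| = s$ therefore forces equality throughout, i.e. $|\L \cap \X_i| = 1$ for every $i = 1,\dots,s$.

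Finally, I would read off the conclusion from the cases $i = 1$ and $i = 2$. Since $\X_1 = \{P\}$, the equality $|\L \cap \X_1| = 1$ says $P \in \L$. Since $\X_2 = \{Q_1,Q_2\}$, the equality $|\L \cap \X_2| = 1$ says that $\L$ contains exactly one of $Q_1, Q_2$. As $P, Q_1, Q_2$ are distinct points of $\X$ (they lie in distinct $\X_i$), the line $\L$ is the unique line joining $P$ to that chosen point, so $\L$ is either the line through $P$ and $Q_1$ or the line through $P$ and $Q_2$, as claimed.

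There is essentially no serious obstacle here once the disjointness of the $\X_i$ and the ``at most one point per $\L_i$'' bound are in place. The one point to handle with care is that the bound $|\L \cap \X_i| \leq 1$ uses $\L \neq \L_i$ in an essential way — this is precisely where the hypothesis $\L \notin \{\L_1,\dots,\L_s\}$ enters — and that the forced equality is exactly what pins down a single point on each of $\L_1$ and $\L_2$, yielding the two possibilities for $\L$.
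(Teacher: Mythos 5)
Your proposal is correct and follows essentially the same route as the paper: bound $|\L \cap \X|$ by $\sum_i |\L \cap \L_i| = s$ using $\L \neq \L_i$, then use the forced equality to conclude that $\L$ meets each $\X_i$ (in particular $\X_1$ and $\X_2$) in exactly one point. Your version merely makes explicit the disjointness of the $\X_i$, which the paper leaves implicit.
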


\begin{proof}
  Suppose $|\L \cap \X| = d_s = s$.  Since
  $\L \not\in \{\L_1,\dots,\L_s\}$, we have
  $s = |\L \cap \X| \leq |\L \cap \L_1| + \cdots + |\L \cap \L_s| =
  s$.  In other words, $\L \cap \L_i$ is a point of
  $\X_i \subseteq \X$ for $i=1,\ldots,s$.  So $\L$ must pass through
  $P$ and either $Q_1$ or $Q_2$.
\end{proof}

\begin{cor}\label{numberoflines}
 Suppose that $\X \subseteq \mathbb{P}^2$ is a
  $\k$-configuration of type $(1,2,\ldots,s)$ with $s \geq 2$.  Then
there are at most $s+1$ lines that contain $s$ points of $\X$.
\end{cor}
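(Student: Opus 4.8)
The plan is to combine Lemma \ref{extraline} with a short incidence argument that removes one of the candidate lines. By Lemma \ref{bounds}$(iii)$ every line meets $\X$ in at most $d_s = s$ points, so a line ``contains $s$ points'' exactly when $|\L \cap \X| = s$. Lemma \ref{extraline} then guarantees that any such line lies in
\[ \{\L_1,\dots,\L_s\} \cup \{\overline{PQ_1},\,\overline{PQ_2}\}, \]
where $\overline{PQ_1}$ and $\overline{PQ_2}$ denote the lines through $P$ and $Q_1$, and through $P$ and $Q_2$, respectively. A priori this is a list of $s+2$ candidates, so the entire content of the corollary is to show that these $s+2$ candidates cannot all be distinct lines that meet $\X$ in $s$ points.

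First I would examine the line $\L_1$ on this list. The subsets $\X_1,\dots,\X_s$ are pairwise disjoint, as follows from condition $(3)$ of Definition \ref{kdefn}, and each $\X_i$ is contained in $\L_i$; hence for every $i \geq 2$ the intersection $\L_1 \cap \X_i$ is contained in the single point $\L_1 \cap \L_i$. Since $\X_1 = \{P\} \subseteq \L_1$, this gives
\[ |\L_1 \cap \X| = \sum_{i=1}^s |\L_1 \cap \X_i| \leq 1 + (s-1) = s. \]
The point I would stress is that equality is very rigid: $|\L_1 \cap \X| = s$ forces $|\L_1 \cap \X_2| = 1$, so $\L_1$ passes through one of $Q_1,Q_2$. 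As $\L_1$ already passes through $P$, and two distinct points determine a unique line, this identifies $\L_1$ with $\overline{PQ_1}$ or with $\overline{PQ_2}$.

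The count then follows by a case split on whether $\L_1$ itself meets $\X$ in $s$ points. If it does not, the lines to be counted already lie in $\{\L_2,\dots,\L_s\} \cup \{\overline{PQ_1},\overline{PQ_2}\}$, a set of at most $s+1$ lines. If it does, then by the previous step $\L_1$ equals $\overline{PQ_1}$ or $\overline{PQ_2}$, so the $s+2$ candidates $\{\L_1,\dots,\L_s,\overline{PQ_1},\overline{PQ_2}\}$ span at most $s+1$ distinct lines. In either case at most $s+1$ lines contain $s$ points of $\X$, as claimed.

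The main obstacle, and the one place where the bound $s+1$ (rather than $s+2$) really comes from, is the rigidity step: pinning down $\L_1$ as one of $\overline{PQ_1},\overline{PQ_2}$. This uses both the disjointness of the $\X_i$ and the elementary fact that $P$ together with a point of $\X_2$ determines a unique line. Once that collapse is established, everything else is bookkeeping layered on top of Lemma \ref{extraline}, which has already done the work of confining the extra lines to $\overline{PQ_1}$ and $\overline{PQ_2}$.
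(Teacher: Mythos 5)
Your proof is correct and takes essentially the same approach as the paper: reduce to the $s+2$ candidates via Lemma \ref{extraline}, then observe that if $\L_1$ meets $\X$ in $s$ points the incidence count forces $\L_1$ to pass through a point of $\X_2$, hence to coincide with one of the two extra lines. Your explicit case split on whether $\L_1$ contains $s$ points is a slightly cleaner organization than the paper's phrasing, but the key rigidity step is identical.
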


\begin{proof}
The only candidates for the lines that contain $s$ points are the
$s$ lines $\L_1,\ldots,\L_s$ that define the $\k$-configuration,
and by Lemma \ref{extraline}, the two lines $\L_{PQ_1}$ and $\L_{PQ_2}$,
i.e., the lines that go through the point of $\X_1 = \{P\}$ and one of 
the two points of $\X_2 = \{Q_1,Q_2\}$.  This gives us $s+2$
lines.  However,  if the lines $\L_{PQ_1}$ and $\L_{PQ_2}$ both contain
$s$ points, then either $\L_1$ is one of these two lines, or does
not contain $s$ points.  Indeed if $\L_1$ contains $s$ points,
then $s=|\L_1 \cap \X| = |\X_1| + |\L_1 \cap \L_2| + \cdots + 
|\L_1 \cap \L_s| =s$.  In particular, $|\L_1 \cap \L_2| =1$,
i.e., $\L_1$  must contain one of the 
two points of $\X_2$, and so $\L_1 = \L_{PQ_1}$ or $\L_{PQ_2}$. 
So, there are at most $s+1$ lines that contain $s$ points $\X$.
\end{proof}

We finish this section with a useful lemma for relabelling a
$\k$-configuration.  This lemma exploits the fact that the lines and subsets
defining a $\k$-configuration need not be unique.

\begin{lem}\label{relabel1}
Suppose that $\X \subseteq \mathbb{P}^2$ is a 
$\k$-configuration of type $(d_1,d_2,\ldots,d_s)$ with $s \geq 2$.
Let $\X_1,\ldots,\X_s$ be the subsets of 
$\X$, and $\L_1,\ldots,\L_s$ the lines used to define $\X$.
Suppose that 
\begin{enumerate}
\item[$\bullet$] $|\L_{s-k} \cap \X| = d_s$ for $k = 0,\ldots,j$.
\item[$\bullet$] $|\L_{s-k} \cap \X| < d_s$ for $k = j+1,\ldots,i-1$, and
\item[$\bullet$] $|\L_{s-i} \cap \X| = d_s$.
\end{enumerate}
Set $\mathbb{T} = \L_{s-i} \cap (\X_{s-j-1} \cup \X_{s-j-2} \cup \cdots 
\cup \X_{s-i+1})$.

Then the $\k$-configuration $\X$ can also be defined using
the subsets $\X'_1,\ldots,\X'_s$ and lines $\L'_1,\ldots,\L'_s$
where 
\begin{enumerate}
\item[$\bullet$] $\X'_k = \X_k$ and $\L'_k = \L_k$ for $k=1,\ldots,s-i-1$,
\item[$\bullet$] $\X'_k = \X_{k+1} \setminus \mathbb{T}$ 
and $\L'_k = \L_{k+1}$ for 
$k=s-i,\ldots,s-j-2$,
\item[$\bullet$] $\X'_{s-j-1} = \X_{s-i} \cup \mathbb{T}$ and 
$\L'_{s-j-1} = \L_{s-i}$, and
\item[$\bullet$] $\X'_k = \X_k$ and $\L'_k = \L_k$ for $k=s-j,\ldots,s$.
\end{enumerate}
\end{lem}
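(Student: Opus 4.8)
The plan is to verify directly that the proposed data $\X'_1,\ldots,\X'_s$ and $\L'_1,\ldots,\L'_s$ satisfy the three conditions of Definition \ref{kdefn}. Conceptually, the relabeling is just a rearrangement that lifts the single line $\L_{s-i}$ up to position $s-j-1$, placing it directly below the block $\L_{s-j},\ldots,\L_s$ of lines already meeting $\X$ in $d_s$ points, while sliding each gap line down by one slot and absorbing the points of $\mathbb{T}$ into the relocated subset.

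First I would record the structural consequences of the hypothesis $|\L_{s-i}\cap\X| = d_s$. By Lemma \ref{necessaryconditions}$(ii)$ applied to $\L_{s-i}$, the values $d_{s-i},\ldots,d_s$ form the consecutive run $d_\ell = d_s - s + \ell$. Since $\X_{s-i}\subseteq\L_{s-i}$ already contributes its $d_{s-i} = d_s-i$ points and any other line meets $\L_{s-i}$ in at most one point, $\L_{s-i}$ must meet each of $\X_{s-i+1},\ldots,\X_s$ in exactly one point. In particular $\mathbb{T}$ collects exactly one point from each of the gap subsets $\X_{s-i+1},\ldots,\X_{s-j-1}$, so $|\mathbb{T}| = i-j-1$, and for every $m$ in the gap range we have $\L_{s-i}\cap\X_m\subseteq\mathbb{T}$.

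Next comes the size bookkeeping. Using the consecutive run I would check that $|\X'_k| = d_k$ for every $k$: the untouched bottom and top blocks are immediate; each lowered gap subset $\X_{k+1}\setminus\mathbb{T}$ loses exactly one point, so $|\X'_k| = d_{k+1}-1 = d_k$; and $|\X'_{s-j-1}| = |\X_{s-i}| + |\mathbb{T}| = (d_s-i)+(i-j-1) = d_{s-j-1}$. Hence the new type is again $(d_1,\ldots,d_s)$, so the sizes are strictly increasing; the lines $\L'_1,\ldots,\L'_s$ are a permutation of the distinct lines $\L_1,\ldots,\L_s$; and since the points of $\mathbb{T}$ are merely transferred out of the gap subsets into $\X'_{s-j-1}$, the $\X'_k$ still partition $\X$ with $\X'_k\subseteq\L'_k$ in each case. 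This settles conditions (1) and (2).

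The heart of the argument is condition (3): each $\L'_k$ must avoid $\X'_1\cup\cdots\cup\X'_{k-1}$. I would split into the four index ranges. For $k$ in the untouched bottom block, in the top block, and in the lowered gap range, the required emptiness follows immediately from the original separation property, because in each of these cases $\L'_k$ equals some $\L_m$ and all of the relevant lower subsets are contained in original subsets $\X_\ell$ with $\ell < m$. The one genuinely new case, which I expect to be the main obstacle, is $k = s-j-1$, where $\L'_{s-j-1} = \L_{s-i}$ must avoid the lowered gap subsets $\X_m\setminus\mathbb{T}$ for $m = s-i+1,\ldots,s-j-1$. Here the original separation is of no use, since $\L_{s-i}$ genuinely meets each such $\X_m$; the resolution is that it meets $\X_m$ \emph{only} inside $\mathbb{T}$, whence $\L_{s-i}\cap(\X_m\setminus\mathbb{T}) = (\L_{s-i}\cap\X_m)\setminus\mathbb{T} = \varnothing$. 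This is exactly why $\mathbb{T}$ was defined to gather all intersection points of $\L_{s-i}$ with the gap subsets, and it is the step on which the validity of the entire relabeling rests.
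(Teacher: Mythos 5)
Your proposal is correct and follows essentially the same route as the paper's proof: a direct verification of the three conditions of Definition \ref{kdefn}, using Lemma \ref{necessaryconditions} to get the consecutive run $d_\ell = d_s-s+\ell$, the count $|\mathbb{T}| = i-j-1$ from the fact that $\L_{s-i}$ meets each gap subset in exactly one point, and the observation that $\L_{s-i}$ meets each $\X_m$ ($m$ in the gap range) only inside $\mathbb{T}$, so that condition (3) holds for the relocated line. Your treatment of condition (3) is slightly more explicit than the paper's (which only remarks that the line $\L'_{s-j-1}$ needs checking), but the substance is identical.
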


\begin{proof}
We need to verify that the subsets $\X'_i$ and lines $\L'_i$
define the same $\k$-configuration, that is, we need to see if they satisfy 
the conditions (1), (2), and (3) of Definition \ref{kdefn}.

We first note that condition $(1)$ holds since
\begin{eqnarray*}
\bigcup_{i=1}^s \X'_k &= & (\X_1 \cup \cdots \cup \X_{s-i-1}) \cup 
\left(\bigcup_{k={s-i}}^{s-j-2} (\X_{k+1} \setminus \mathbb{T})\right) \cup ( \X_{s-i} \cup \mathbb{T}) \cup (\X_{s-j} \cup \cdots \cup \X_s) \\
& = & \X_1 \cup \cdots \cup \X_s = \X.
\end{eqnarray*}

For condition $(2)$, it is clear that $\X'_k \subseteq \L'_k$ for all
$k$.  We now verify that $|\X'_k| = d_k$ for all $k$.  For 
$k=1,\ldots,s-i-1$ and $k = s-j,\ldots,s$ this is immediate
since $\X_k' = \X_k$.    
Because $|\L_{s-i} \cap \X| = d_s$, it follows by Lemma 
\ref{necessaryconditions} that $d_{s-k} = d_s - s + (s-k) = d_s -k$ 
for $k=i,\ldots,j+1$.
Moreover, as in the proof of Lemma \ref{necessaryconditions},
$\L_{s-i} \cap \L_{s-k} \in \X_{s-k}$ for all $k=j+1,\ldots,i-1$.
So $|\mathbb{T}| = i-1-j$, and thus 
\[|\X'_{s-j+1}| = |\X_{s-i} \cup \mathbb{T}|  = d_s - i  + i-1-j = d_s - (j+1) = d_{s-j+1}.\]
Also, again since $\L_{s-i} \cap \L_{s-k} \in \X_{s-k}$ for $k=j+1,\ldots,i-1$,
we have
\[
|\X'_k| = |\X_{k+1} \setminus \mathbb{T}| = d_{k+1} - 1 = d_s-s+k+1 -1 = d_s -s+k
=d_k\]
for $k= s-i,\ldots,s-j-2$. 

Finally, for condition $(3)$, we only need to check the line
$\L'_{s-j-1}$ since the result is true for the other lines by
the construction of $\X$ using the lines $\L_1,\ldots,\L_s$.   
Now $\L'_{s-j-1} = \L_{s-i}$, and we know
that it does not intersect with the points $\X'_k = \X_k$ with
$k < s-i$.  Also, by construction, $\L'_{s-j-1}$ does not
intersect with the points of $\X'_{s-i},\ldots,\X'_{s-j-2}$.  So condition
$(3)$ holds.
\end{proof}

\begin{figure}[ht]
  \centering
  \begin{tikzpicture}[scale=0.40]
    \node at (7,-2) {$\L_1$}; \node at (-1,-2) {$\L_2$}; \node at
    (-2,3) {$\L_3$}; \node at (-2,1.5) {$\L_4$}; \node at (-2,0)
    {$\L_5$};
    \clip (-1,-1) rectangle (11,8); \draw (-1,0)--(11,0); \draw
    (-1,-1)--(10,10); \draw (-1,3)--(11,3); \draw (-1,1.5)--(11,1.5);
    \draw (8,-6)--(2,12); 
    \node[my circle] at (0,0) {}; \node[my circle] at (2,0) {};
    \node[my circle] at (4,0) {}; \node[my circle] at (6,0) {};
    \node[my circle] at (8,0) {}; \node[my circle] at (10,0) {};
    \node[my kite] at (3.25,1.5) {}; \node[my kite] at (5,1.5) {};
    \node[my kite] at (6.75,1.5) {}; \node[my kite] at (8.75,1.5) {};
    \node[my square] at (3,3) {}; \node[my square] at (5,3) {};
    \node[my square] at (7,3) {}; \node[my square] at (9.25,3) {};
    \node[my kite] at (1.5,1.5) {}; \node[my triangle] at (4,4) {};
    \node[my triangle] at (5.5,5.5) {}; \node[my triangle] at (7,7)
    {}; \node[my star] at (3.8,6.6) {};
  \end{tikzpicture}\qquad\qquad
  \begin{tikzpicture}[scale=0.40]
    \node at (7,-2) {$\L'_1$}; \node at (-1,-2) {$\L'_4$}; \node at
    (-2,3) {$\L'_2$}; \node at (-2,1.5) {$\L'_3$}; \node at (-2,0)
    {$\L'_5$};
    \clip (-1,-1) rectangle (11,8); \draw (-1,0)--(11,0); \draw
    (-1,-1)--(10,10); \draw (-1,3)--(11,3); \draw (-1,1.5)--(11,1.5);
    \draw (8,-6)--(2,12); 
    \node[my circle] at (0,0) {}; \node[my circle] at (2,0) {};
    \node[my circle] at (4,0) {}; \node[my circle] at (6,0) {};
    \node[my circle] at (8,0) {}; \node[my circle] at (10,0) {};
    \node[my kite] at (3.25,1.5) {}; \node[my kite] at (5,1.5) {};
    \node[my kite] at (6.75,1.5) {}; \node[my kite] at (8.75,1.5) {};
    \node[my triangle] at (3,3) {}; \node[my square] at (5,3) {};
    \node[my square] at (7,3) {}; \node[my square] at (9.25,3) {};
    \node[my triangle] at (1.5,1.5) {}; \node[my triangle] at (4,4)
    {}; \node[my triangle] at (5.5,5.5) {}; \node[my triangle] at
    (7,7) {}; \node[my star] at (3.8,6.6) {};
  \end{tikzpicture}

  \caption{Relabelling lines of a $\k$-configuration of type 
$(1,3,4,5,6)$}
  \label{fig:relabel}
\end{figure}
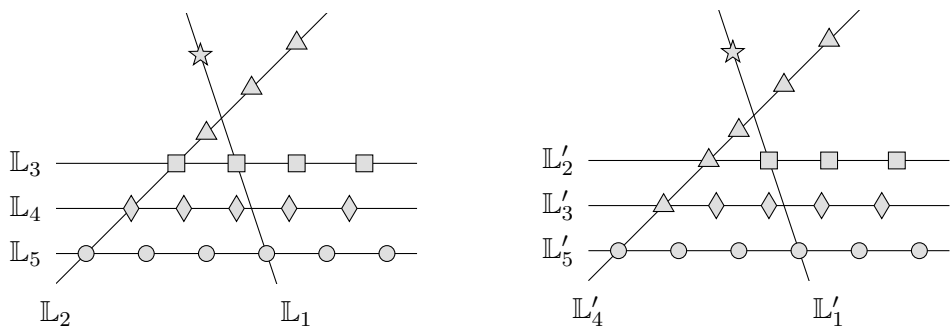

\begin{exmp}
  Figure \ref{fig:relabel} gives an example of the relabelling.  As
  before, the shapes denote which points belong to the subsets $\X_i$.
\end{exmp}

\begin{cor}\label{relabelcor}
Suppose that $\X \subseteq \mathbb{P}^2$ is a 
$\k$-configuration of type $(d_1,d_2,\ldots,d_s)$ with $s \geq 2$.
Let $\L_1,\ldots,\L_s$ be the lines used to define $\X$.  
After relabelling, we can assume that there is an $r$
such that $|\L_{s-j} \cap \X| = d_s$ for all $0 \leq j \leq r-1$,
but $|\L_{s-j} \cap \X| < d_s$ for all $r \leq j \leq s-1$. 
\end{cor}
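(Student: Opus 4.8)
The plan is to prove the claim by iterating the relabelling operation of Lemma \ref{relabel1}, using it to ``push'' every line meeting $\X$ in $d_s$ points toward the top of the list. First I would record the one invariant that makes this work: since Lemma \ref{relabel1} only reindexes the same $s$ geometric lines $\L_1,\dots,\L_s$ (the \emph{set} of lines is unchanged, only which index each carries), the number
\[ r := \#\{k : |\L_k \cap \X| = d_s\} \]
is the same before and after any application of that lemma. Note also that $r \geq 1$: indeed $\X_s \subseteq \L_s$ gives $|\L_s \cap \X| \geq d_s$, while Lemma \ref{bounds}$(iii)$ gives $|\L_s \cap \X| \leq d_s$, so $\L_s$ always meets $\X$ in exactly $d_s$ points.

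Next I would introduce the quantity to be improved. For a given labelling, let $t$ be the length of the maximal initial block of $d_s$-lines counted from the top; that is, $|\L_{s-k} \cap \X| = d_s$ for $0 \leq k \leq t-1$ and (if $t < s$) $|\L_{s-t} \cap \X| < d_s$. By the previous paragraph $t \geq 1$, and clearly $t \leq r$. If $t = r$ we are done: the top $r$ lines all meet $\X$ in $d_s$ points, and since only $r$ lines do so in total, the remaining $s-r$ lines $\L_1,\dots,\L_{s-r}$ each meet $\X$ in fewer than $d_s$ points, which is exactly the asserted conclusion.

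The inductive step is the case $t < r$. Here there is still a $d_s$-line below the top block, so I would let $i$ be the smallest integer with $i > t$ and $|\L_{s-i} \cap \X| = d_s$. Setting $j = t-1$, the three bulleted hypotheses of Lemma \ref{relabel1} are met: the top $j+1 = t$ lines meet $\X$ in $d_s$ points; the lines with $k = j+1,\dots,i-1$ (a nonempty range, since $i \geq t+1 = j+2$) all meet $\X$ in fewer than $d_s$ points by minimality of $i$ together with the definition of $t$ and Lemma \ref{bounds}$(iii)$; and $|\L_{s-i} \cap \X| = d_s$. Applying Lemma \ref{relabel1} produces a relabelling of the same $\k$-configuration in which $\L_{s-i}$ is moved into position $s-j-1 = s-t$, so that the new top block has length $t+1$, while $r$ is unchanged. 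Repeating this step strictly increases $t$ each time and hence terminates (after at most $r-1$ applications) with $t = r$, giving the desired labelling.

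I expect the only real point requiring care is the verification that the hypotheses of Lemma \ref{relabel1} are genuinely satisfied in the inductive step---in particular that the ``gap'' of sub-$d_s$ lines between the top block and $\L_{s-i}$ is correctly captured by the choice of $i$, and that $i \geq j+2$ so that this gap is nonempty---together with the bookkeeping observation that relabelling merely permutes the lines and therefore preserves $r$. Everything else is routine.
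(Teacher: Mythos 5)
Your proof is correct and follows essentially the same route as the paper: both arguments work by iterating the relabelling of Lemma \ref{relabel1} until every line meeting $\X$ in $d_s$ points sits at the top of the list. Your write-up is in fact more careful than the paper's one-line justification, since you make explicit the invariant (the count $r$ is preserved because the relabelling merely permutes the lines) and the terminating quantity (the length $t$ of the initial block strictly increases), both of which the paper leaves implicit.
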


\begin{proof}
In the assumptions of Lemma \ref{relabel1}, we are assuming
that $\L_s,\ldots,\L_{s-j}$  all
meet $\X$ at $d_s$ points, but $\L_{s-j-1}$ does not.  
After applying the relabelling of Lemma
\ref{relabel1}, the lines $\L'_s,\ldots,\L'_{s-j-1}$ 
now meet $\X$ at $d_s$ points.  By reiterating Lemma \ref{relabel1},
we arrive at the conclusion.
\end{proof}

\begin{rem}\label{hilbertfcnreducedbnd}
In the introduction we mentioned that the number of lines 
that contain $d_s$ points is bounded by a value of the first
difference of the Hilbert function.  Specifically, the number
of lines that contain $d_s$ points is bounded above by 
$\Delta \H_{\X}(d_s-1) + 1$. 

We sketch out how to prove this result.  Roberts and Roitman 
\cite[Theorem 1.2]{RR} give a formula for the Hilbert function
$\H_{\X}$ of a $\k$-configuration in terms of the type 
$(d_1,d_2,\ldots,d_s)$.  It follows
from this formula that $\H_{\X}(d_s-1) = \sum_{i=1}^s d_i$, and 
$\H_{\X}(d_s-2) = (\sum_{i=1}^s d_i) - t$ where $t$ is the number
of consecutive integers at the end of $(d_1,d_2,\ldots,d_s)$.
 So,
\[\Delta \H_\X(d_s-1) = \H_{\X}(d_s-1) - \H_{\X}(d_s-2) = t.\]
Note that if $d_s = s$, then $t=s$.
It
follows by Lemma \ref{necessaryconditions} that if $d_s > s$,
then $t$ is an upper bound on the number of lines that contain $d_s$
points, and if $d_s =s$, then by Corollary \ref{numberoflines},
$t+1 = s+1$ is an upper bound.  We can combine this information
in the statement that  number of lines
that contain $d_s$ points is bounded above by
$\Delta \H_\X(d_s-1) +1$.
\end{rem}


\section{The case \texorpdfstring{$d_s > s$}{ds greater than s}}
\label{sec:main}

In this section, we prove 
Theorem \ref{maintheorem} in the case 
the $\k$-configuration $\X$ has type $(d_1,\dots,d_s)$ with $d_s > s$.

\begin{thm}\label{bigmult}
  Let $\X \subseteq \mathbb{P}^2$ be a $\k$-configuration of type
  $d=(d_1,\dots,d_s)$, and assume that there are $r$ lines containing
  exactly $d_s$ points of $\X$.  If $d_s > s$, then
  $r = \Delta \H_{m\X} (m d_s -1)$ for all $m \geq 2$.
\end{thm}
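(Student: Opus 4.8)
The plan is to apply the Cooper--Harbourne--Teitler bounds (Theorem \ref{CHT}) to the fat point scheme $m\X$ at the two degrees $t = md_s - 1$ and $t = md_s - 2$, using a single carefully chosen reduction vector, and to show that at both degrees the lower and upper bounds coincide. First I would invoke Corollary \ref{relabelcor} to relabel the defining data so that $\L_s, \L_{s-1}, \dots, \L_{s-r+1}$ are precisely the $r$ lines meeting $\X$ in $d_s$ points (the ``full'' lines), while $\L_{s-r}, \dots, \L_1$ meet $\X$ in fewer points; since $d_s > s$, Lemma \ref{necessaryconditions}$(i)$ guarantees every line meeting $\X$ in $d_s$ points is among the $\L_i$, so this accounts for all of them. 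I then build a reduction vector $\mathbf v = (v_1, v_2, \dots)$ by peeling the lines in the cyclic order $\L_s, \L_{s-1}, \dots, \L_1, \L_s, \dots, \L_1, \dots$, repeating the full sweep $m$ times; since every point of $\X$ has multiplicity $m$ and lies on at least one $\L_i$, the residual is empty after at most $ms$ peels and $\sum_k v_k = \deg(m\X)$. Writing $D = \deg(m\X) = \binom{m+1}{2}\sum_{i=1}^s d_i$ and reindexing \eqref{lowerbound} as $f_{\mathbf v}(t) = \sum_{k\ge 1}\min(t-k+2, v_k)$, the whole proof will reduce to controlling the entries $v_k$.

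The first key computation is to pin down the first $r$ entries of $\mathbf v$, which come from peeling the full lines in the first sweep. By Lemma \ref{necessaryconditions}$(ii)$ the type ends in the $r$ consecutive values $d_{s-r+1}, \dots, d_s$, so the full line $\L_{s-j}$ (peeled $(j{+}1)$-st, $0\le j\le r-1$) carries $d_{s-j}=d_s-j$ of its own points of $\X_{s-j}$ together with exactly $j$ further points of $\X$. Each such extra point lies in some $\X_c$ with $c>s-j$, hence on a previously peeled full line $\L_c$; because $\L_{s-j}\cap\L_c$ is a single point and the $\X_c$ are pairwise disjoint (condition $(3)$ of Definition \ref{kdefn}), these $j$ extra points sit one on each of $\L_{s-j+1},\dots,\L_s$, and a counting argument shows each has had its multiplicity dropped by exactly one when $\L_{s-j}$ is peeled. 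Counting degrees then gives $v_{j+1} = m(d_s-j)+(m-1)j = md_s-j$, i.e. $v_k = md_s-(k-1)$ for $k=1,\dots,r$, equivalently $v_k+k = md_s+1$ for exactly these indices.

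The crux of the argument, and the step I expect to be the main obstacle, is to show the strict bound $v_k \le md_s - k$ (equivalently $v_k+k \le md_s$) for every later index $k>r$. I would prove this by bounding the multiplicity available on the line being peeled: in sweep $p$ every point has already lost at least $p-1$ from its multiplicity, and a point shared with an earlier-peeled line of the current sweep has lost more. Combining this with $|\L\cap\X|\le d_s$ (Lemma \ref{bounds}$(iii)$) and, for the non-full lines of the first sweep, with $d_{s-l+1}\le d_s-l+1$ (Lemma \ref{bounds}$(i)$), a short case analysis on whether the line is full and whether $p=1$ or $p\ge 2$ yields $v_k+k \le md_s$ in every case, each case reducing to an inequality such as $(p-1)(d_s-s)\ge 1$ or $m\ge 2$. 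This is exactly where the hypotheses $d_s>s$ and $m\ge 2$ enter; the delicate point is that the naive bound $v_k \le m\,|\L\cap\X|$ is too weak for the late lines of the first sweep, so one must genuinely account for the multiplicity already removed at the shared points.

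With these two facts the bounds collapse. At $t=md_s-1$ one has $\min(md_s+1-k,v_k)=v_k$ for every $k$ (equality is forced for $k\le r$, and the strict bound gives it for $k>r$), so $f_{\mathbf v}(md_s-1)=\sum_k v_k = D$; since $\H_{m\X}\le D$ always, $\H_{m\X}(md_s-1)=D$. At $t=md_s-2$ the first $r$ terms of $f_{\mathbf v}$ become $md_s-k=v_k-1$ while the remaining terms stay equal to $v_k$, so $f_{\mathbf v}(md_s-2)=D-r$; and taking $i=r$ in \eqref{upperbound} gives $F_{\mathbf v}(md_s-2)\le D-r$, whence $\H_{m\X}(md_s-2)=D-r$. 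Subtracting yields $\Delta\H_{m\X}(md_s-1)=D-(D-r)=r$ for every $m\ge 2$, as claimed.
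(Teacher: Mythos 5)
Your proposal is correct and follows essentially the same route as the paper's proof: the same relabelling via Corollary \ref{relabelcor}, the same reduction vector obtained by peeling $\L_s,\dots,\L_1$ cyclically $m$ times, the same exact values $v_k = md_s-k+1$ for $k\le r$ and bound $v_k\le md_s-k$ for $k>r$ (with $d_s>s$ entering in the later sweeps), and the same evaluation of $f_{\mathbf v}$ and $F_{\mathbf v}$ (taking $i=r$) at $md_s-2$ together with the degree bound at $md_s-1$. No substantive differences to report.
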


\begin{proof}
  By Lemma \ref{necessaryconditions} $(i)$, the lines containing $d_s$
  points of $\X$ fall among the lines $\L_1,\dots,\L_s$ defining the
  $\k$-configuration.  By Corollary \ref{relabelcor}, we may assume
  that the lines containing exactly $d_s$ points of $\X$ are
  $\L_s,\dots,\L_{s-r+1}$, while the lines $\L_{s-r},\dots,\L_1$
  contain less than $d_s$ points of $\X$.

  We will apply Theorem \ref{CHT} to compute certain values of
  $\H_{m\X}$. Towards this goal, we obtain the reduction vector
  $\bf v$ of $\X$ using the sequence of lines
  \begin{equation*}
    \L_s,\dots,\L_1,
    \L_s,\dots,\L_1,
    \dots,
    \L_s,\dots,\L_1,
  \end{equation*}
  where the subsequence $\L_s,\dots,\L_1$ is repeated $m$ times.

  We claim that, for $i=1,\dots,r$, we have
  \begin{equation*}
    v_i = m d_s -i+1.
  \end{equation*}
  Let $\Z_0 = m\X$. Since $|\L_s \cap \X| = d_s$, we have
  \begin{equation*}
    v_1 = \deg (\L_s \cap \Z_0) = m d_s.
  \end{equation*}
  Now let $\Z_1$ be the residual of $\Z_0$ with respect to the line
  $\L_s$. The line $\L_{s-1}$ contains the $d_{s-1}$ points of
  $\X_{s-1}$, and the point $\X_s \cap \L_{s-1}$. The multiplicity of
  the point $\X_s \cap \L_{s-1}$ in $\Z_1$ is $m-1$, while the points
  of $\X_{s-1}$ have multiplicity $m$ in $\Z_1$. Thus we get
  \begin{equation*}
    v_2 = \deg (\L_{s-1} \cap \Z_1) = m d_{s-1} +m-1 =
    m (d_s-1) +m-1=m d_s -1,
  \end{equation*}
  where $d_{s-1} = d_s -1$ by Lemma \ref{necessaryconditions}
  $(ii)$. Continuing in this fashion, for $i=3,\dots,r$, we have a
  scheme $\Z_{i-1}$. The line $\L_{s-i+1}$ contains the $d_{s-i+1}$ points
  of $\X_{s-i+1}$, and the points $\X_s \cap \L_{s-i+1}$,
  $\X_{s-1} \cap \L_{s-i+1}$, $\dots$, $\X_{s-i+2} \cap \L_{s-i+1}$. The
  former have multiplicity $m$ in $\Z_{i-1}$, while the latter have
  multiplicity $m-1$ in $\Z_{i-1}$. Thus, for $i=1,\dots,r$, we get
  \begin{equation*}
    \begin{split}
    v_i &= \deg (\L_{s-i+1} \cap \Z_{i-1})
                  = m d_{s-i+1} +(m-1)(i-1)\\
    &= m (d_s-i+1) +(m-1)(i-1)=m d_s -i+1.
  \end{split}
  \end{equation*}

  Next we claim that, for $i=r+1,\dots,s$, we have
  \begin{equation*}
    v_i \leq m d_s -i.
  \end{equation*}
  The line $\L_{s-i+1}$ contains the $d_{s-i+1}$ points of
  $\X_{s-i+1}$, and $e$ points at the intersections
  $\X_t \cap \L_{s-i+1}$ for $t>s-i+1$. Note that
  $d_{s-i+1} + e < d_s$ because we assumed that, for $i=r+1,\dots,s$,
  the line $\L_{s-i+1}$ contains less than $d_s$ points of $\X$.  The points of
  $\X_{s-i+1}$ have multiplicity $m$ in $\Z_{i-1}$, while each point
  $\X_t \cap \L_{s-i+1}$ has multiplicity $m-1$ in $\Z_{i-1}$. Thus,
  for $i=r+1,\dots,s$, we get
  \begin{equation*}
    \begin{split}
    v_i &= \deg (\L_{s-i+1} \cap \Z_{i-1}) = m d_{s-i+1} +(m-1)e\\
              & = d_{s-i+1} +(m-1)(d_{s-i+1} +e)\\
              &< d_s -i+1 + (m-1) d_s = m d_s -i+1,
            \end{split}
  \end{equation*}
  where the inequality uses Lemma \ref{bounds} $(i)$. This proves our
  claim.

  This concludes our first round of removing the lines
  $\L_s,\dots,\L_1$, corresponding to the entries
  $v_1,\dots,v_s$ of the reduction vector ${\bf v}$. Now
  we focus on later passes. We can index later entries of the
  reduction vector by $v_{js+i}$, where $j=1,\dots,m-1$ keeps
  track of the current pass (the first pass corresponding to $j=0$),
  and $i=1,\dots,s$ indicates that we are going to remove the line
  $\L_{s-i+1}$. We claim that
  \begin{equation*}
    v_{js+i} \leq m d_s - (js+i).
  \end{equation*}
  We proceed to estimate the multiplicity of points in
  $\L_{s-i+1} \cap \Z_{js+i-1}$. The line $\L_{s-i+1}$ contains the
  $d_{s-i+1}$ points of $\X_{s-i+1}$; these have multiplicity at most
  $m-j$ in $\Z_{js+i-1}$, because the line $\L_{s-i+1}$ was removed
  $j$ times in previous passes. In addition, the line $\L_{s-i+1}$
  contains $e$ points at the intersections $\X_t \cap \L_{s-i+1}$ for
  $t>s-i+1$, where $d_{s-i+1} + e \leq d_s$ as before. Each of these
  points has been removed $j$ times in previous passes and once in the
  current pass, and therefore, it has multiplicity at most $m-j-1$ in
  $\Z_{js+i-1}$. Altogether, for $j=1,\dots,m-1$ and $i=1,\dots,s$, we
  obtain the following estimate:
  \begin{equation*}
    \begin{split}
    v_{js+i}
    &= \deg (\L_{s-i+1} \cap \Z_{js+i-1}) \leq (m-j) d_{s-i+1} +(m-j-1)e\\
    & = d_{s-i+1} +(m-j-1)(d_{s-i+1} +e)\\
    & \leq d_s -i+1 + (m-j-1) d_s = m d_s - j d_s -i+1\\
    & < m d_s -j s -i +1,
  \end{split}
  \end{equation*}
  using Lemma \ref{bounds} $(i)$ and the hypothesis $d_s > s$. This
  proves our claim.

  Observe that after removing the lines $\L_s,\dots,\L_1$ $m$ times,
  we have $\Z_{ms+1} = \varnothing$. In other words,
  ${\bf v} = (v_1,\dots,v_{ms})$ is a complete reduction
  vector for $m\X$. We can summarize our findings about ${\bf v}$ as
  follows:
  \begin{align*}
    &v_i = md_s -i +1,& &\mbox{for $i=1,\ldots,r$},\\
    &v_i \leq md_s -i,& &\mbox{for $i=r+1,\ldots,ms$}.
  \end{align*}
  
  Now
  we compute the value
  $\H_{m\X} (m d_s -2)$ using Theorem \ref{CHT}.  Recall that a lower bound is given by
  \begin{equation*}
    f_{\bf v} (m d_s - 2) = \sum_{i=0}^{ms-1} \min (m d_s -1-i,v_{i+1}).
  \end{equation*}
  Based on our previous estimates, we have
  \begin{align*}
    &\min (m d_s -1-i,v_{i+1}) = md_s -1-i,&
      &\mbox{for $i=0,\ldots,r-1$},\\
    &\min (m d_s -1-i,v_{i+1}) = v_{i+1},&
      &\mbox{for $i=r,\ldots,ms-1$}.
  \end{align*}
  Hence we get
  \begin{equation*}
    f_{\bf v} (m d_s - 2) = \sum_{i=0}^{r-1} (m d_s -1-i) +
    \sum_{i=r}^{ms-1} v_{i+1}.
  \end{equation*}
  As for the upper bound, it is given by
  \begin{equation*}
    F_{\bf v} (m d_s -2) = \min_{0\leq i\leq ms} \bigg(
      \binom{m d_s}{2} - \binom{m d_s -i}{2} + \sum_{j=i+1}^{ms} v_j
    \bigg).    
  \end{equation*}
  Evaluating the right hand side for $i=r$, we get
  \begin{equation*}
    \begin{split}
      F_{\bf v} (m d_s -2)
      &\leq \binom{m d_s}{2} - \binom{m d_s -r}{2} + \sum_{j=r+1}^{ms} v_j\\
      &=\sum_{h=m d_s -r}^{m d_s -1} h + \sum_{j=r+1}^{ms} v_j\\
      &=\sum_{i=0}^{r-1} (m d_s -1-i) + \sum_{i=r}^{ms-1} v_{i+1}.
    \end{split}
  \end{equation*}
  Combining these bounds, we obtain
  \begin{equation*}
    \H_{m\X} (m d_s -2) =
    \sum_{i=0}^{r-1} (m d_s -1-i) + \sum_{i=r}^{ms-1} v_{i+1}.
  \end{equation*}
  Similarly, we can use Theorem \ref{CHT} to compute $\H_{m\X} (m d_s -1)$. In this case, the lower bound is given by
  \begin{equation*}
    \begin{split}
      f_{\bf v} (m d_s - 1)
      &= \sum_{i=0}^{ms-1} \min (m d_s -i, v_{i+1})\\
      &= \sum_{i=0}^{r-1} (m d_s -i) +
      \sum_{i=r}^{ms-1} v_{i+1} = \sum_{i=0}^{ms-1} v_{i+1}.
    \end{split}
  \end{equation*}
Note that since $f_{\bf v} (m d_s-1)$ is the sum of all
the entries of the reduction vector, $f_{\bf v}(m d_s - 1) = \deg(m\X)$ by
\cite[Remark 1.2.6]{CHT}.  On the other hand, it is well-known
that for any zero-dimensional scheme $\Z$, $\H_{\Z}(t) \leq \deg(\Z)$ for all $t$
(see, e.g. \cite{CTV}).  We thus have
\[ \H_{m\X}(m d_s -1)=\sum_{i=0}^{r-1} (m d_s -i) +
      \sum_{i=r}^{ms-1} v_{i+1}=\deg(m\X).\]
%
%

  Finally, computing the first difference of the Hilbert function gives
the desired result:
  \begin{equation*}
    \begin{split}
      \Delta \H_{m\X} (md_s -1)
      &= \H_{m\X} (md_s -1) - \H_{m\X} (md_s -2) \\
      &= \sum_{i=0}^{r-1} (m d_s -i) - \sum_{i=0}^{r-1} (m d_s -1-i)
      = \sum_{i=0}^{r-1} 1 = r.
    \end{split}
  \end{equation*}
\end{proof}

\begin{exmp}
  Consider the $\k$-configuration $\X$ of type $(1,3,4,5)$
  of Example \ref{ex:CHT}. There
  are three lines containing $d_4=5$ points, namely $\L_2$, $\L_3$, and
  $\L_4$.

  Our computation in Example \ref{ex:CHT} shows that
  $\H_{2\X}(2d_4-2) = \H_{2\X} (8) = 36$. In fact, this is an instance of the general
  computation carried out in the proof of Theorem \ref{bigmult}. A
  similar computation yields $\H_{2\X} (9) = 39$. Therefore we have
$$
\Delta \H_{2\X} (9) = \H_{2\X}(9)-\H_{2\X}(8)=39-36=3,
$$
as desired.
%
\end{exmp}

\section{The case \texorpdfstring{$d_s = s$}{ds=s}.}

In this section we focus on $\k$-configurations of type
$d = (d_1,\ldots,d_s)$ with $d_s = s \geq 2$ (as mentioned in the introduction, the case $d=(1)$ is a single point).  As noted in Lemma \ref{bounds},
the $\k$-configuration $\X$ must have type $(1,2,\ldots,s)$.  
Unlike the case $d_s > s$, the value of $\Delta \H_{2\X}(2d_s-1)$
need not equal the number of lines that contain $d_s = s$
points of $\X$.   As a simple example, 
consider the $\k$-configuration of type $(1,2,3)$
given in Figure \ref{specialcase}.
\begin{figure}[ht]
  \centering
  \begin{tikzpicture}[scale=0.4]
  \node at (-2,6) {$\L_1$};
  \node at (-2,0) {$\L_2$};
  \node at (-1,-2) {$\L_3$};
  \clip (-1,-1) rectangle (10,8);
  \draw (-1,0)--(10,0);
  \draw (-1,-1)--(10,10);
  \draw (-2,6)--(13,-3); 
  \draw[dashed] (2,-6)--(8,12); 
  \node[my circle] at (1,1) {};
  \node[my circle] at (3,3) {};
  \node[my circle] at (6,6) {};
  \node[my square] at (4,0) {};
  \node[my square] at (6.5,0) {};
  \node[my star] at (5.5,1.5) {};
\end{tikzpicture}

\caption{A $\k$-configuration of type $(1,2,3)$ with exactly one line with three
points}

\label{specialcase}
\end{figure}
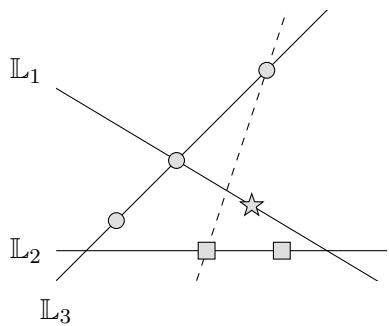
This $\k$-configuration has exactly one line containing exactly three points
(namely, the line $\L_3$).  However, when we compute the Hilbert
function of $2\X$, we get
\[\H_{2\X} : 1~~~3~~~6~~~10~~~~15~~~~18~~~~18~~~~\rightarrow,\]
and consequently, $\Delta \H_{2\X}(2\cdot3-1) = \H_{2\X}(5) - \H_{2\X}(4) = 18-15 = 3$.
So, the hypothesis that $d_s > s$ in Theorem \ref{bigmult} is necessary.   

In this section, we will derive a result similar to Theorem \ref{bigmult}.
However, in order to find the number of lines that contain
$s$ points of $\X$, we need to consider the Hilbert function
of $m\X$ with $m \geq s+1$ instead of $m \geq 2$.  We
need a more subtle argument, in part, because of Lemma \ref{extraline}. 
That is, unlike the case of $d_s > s$, there may be up to two extra lines $\L$
that contains $s$ points of $\X$, where $\L$ is not among the lines
that defines the $\k$-configuration.

We begin with a lemma that allows us to break our argument
into three separate cases.  This lemma is similar to Lemma \ref{relabel1}
in that it allows us to make some additional assumptions about
the lines $\L_1,\ldots,\L_s$ and points $\X_1,\ldots,\X_s$ used
to define the $\k$-configuration.

\begin{lem}\label{relabel2}
Suppose that $\X \subseteq \mathbb{P}^2$ is a 
$\k$-configuration of type $(1,2,\ldots,s)$ with $s \geq 2$.
Let $\X_1,\ldots,\X_s$ be the subsets of 
$\X$, and $\L_1,\ldots,\L_s$ the lines used to define $\X$.

Then one of the three disjoint cases must hold:
\begin{enumerate}
\item[$(i)$] 
  There are exactly $s+1$ lines that contain $s$ points
of $\X$, and the points of $\X$ are precisely the pairwise intersections of such lines.
\item[$(ii)$]  
There are exactly $s$ lines that contain $s$ points of $\X$, and
we can assume that these lines are $\L_1,\ldots,\L_s$. Moreover, for
each $i=1,\dots,s$, the set $\X \cap \L_i$ contains $s-1$ points
located at the intersection of $\L_i$ and $\L_j$ (with $j\neq i$), and a
single point $P_i$ that does not belong to any line $\L_j$ for
$j\neq i$.
\item[$(iii)$] There are $1 \leq r < s$ lines that contain
$s$ points, and furthermore, after a relabelling of the
lines $\L_1,\ldots,\L_s$ and subsets $\X_1,\ldots,\X_s$, we can
assume that none of these $r$ lines pass through
the point of $\X_1 = \{P\}$.
\end{enumerate}
\end{lem}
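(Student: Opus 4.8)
The plan is to organize the entire trichotomy around a single quantity: let $N$ denote the number of lines meeting $\X$ in exactly $s$ points. Since $\L_s$ always meets $\X$ in $d_s = s$ points, and since Corollary \ref{numberoflines} bounds the number of such lines by $s+1$, we have $1 \leq N \leq s+1$. The three cases $N = s+1$, $N = s$, and $1 \leq N \leq s-1$ are then visibly disjoint and exhaustive, so the only real content is to extract the stated geometry from each value of $N$. At the outset I would record that $|\X| = \sum_{i=1}^s i = \binom{s+1}{2}$, since this number governs all the counting below.

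For cases $(i)$ and $(ii)$ I would run one uniform incidence count. Let $M_1,\dots,M_N$ be the lines meeting $\X$ in exactly $s$ points, and for $k \geq 0$ let $x_k$ be the number of points of $\X$ lying on exactly $k$ of them. Counting incidences gives $\sum_k k\,x_k = Ns$; counting points gives $\sum_k x_k = \binom{s+1}{2}$; and since two distinct lines meet in at most one point, $\sum_k \binom{k}{2}x_k \leq \binom{N}{2}$. Using $\binom{k}{2}\geq k-1$ (with equality exactly for $k\in\{1,2\}$), these three relations force, when $N=s+1$ or $N=s$, that $x_0=0$, that $x_k=0$ for $k\geq 3$, and that every pair of the $M_j$ meets at a point of $\X$. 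When $N=s+1$ this yields $x_2 = \binom{s+1}{2}$ and $x_1 = 0$, so $\X$ is precisely the set of pairwise intersections, giving $(i)$. When $N=s$ it yields $x_2 = \binom{s}{2}$ and $x_1 = s$, so $\X$ is the $\binom{s}{2}$ pairwise intersections together with one ``free'' point on each $M_j$; a peeling argument (order the $M_j$ and assign to each the points not lying on earlier lines) then re-presents $\X$ as a $\k$-configuration whose defining lines are exactly $M_1,\dots,M_s$, establishing $(ii)$.

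Case $(iii)$, with $1 \leq r < s$, is where I expect the genuine work. I would first apply Corollary \ref{relabelcor} to relabel so that the special lines \emph{among the defining lines} are $\L_s,\dots,\L_{s-r'+1}$, while $\L_1,\dots,\L_{s-r'}$ are not special; since $r<s$ this already forces $\L_1$ to be non-special. By condition $(3)$ of Definition \ref{kdefn}, the apex $P$ lies on none of $\L_2,\dots,\L_s$, hence on none of the special defining lines. If every special line is a defining line, then $P$ avoids all $r$ of them and we are finished. The obstacle is the remaining possibility, which is exactly what Lemma \ref{extraline} controls: a special line $M$ that is not a defining line must pass through the apex $P$ (and through a point of $\X_2$), and the proof of Lemma \ref{extraline} shows such an $M$ meets each $\L_i$ in a point of $\X_i$, so $M$ carries one point from each subset $\X_1,\dots,\X_s$.

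To dispose of this case I would re-root the configuration: using the extra special line $M$ as one of the defining lines in a new presentation and choosing as new apex a point lying on no special line. Here the $\k$-configuration hypothesis is essential, since it rules out the degenerate concurrences that would otherwise obstruct such a re-rooting (for example, it forbids the leftover of a peeling step from being a collinear set, which is what would happen if too many special lines passed through a common point). Verifying that a valid re-rooting always exists, and that the resulting apex avoids \emph{every} special line and not merely the defining ones, is the crux of $(iii)$ and the step I expect to be most delicate; I would treat it by combining Lemma \ref{extraline} with the peeling structure used in $(ii)$, checking that the new lines and subsets satisfy the three conditions of Definition \ref{kdefn}.
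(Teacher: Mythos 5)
Your treatment of cases $(i)$ and $(ii)$ is correct and takes a genuinely different route from the paper. The paper proves $(i)$ by induction on $s$ (peeling off $\HH_{s+1}$ and counting), and proves $(ii)$ by a three-way subcase analysis on how many of the special lines occur among the original defining lines, reducing to $(i)$ at the end. Your single incidence count --- $\sum_k k\,x_k = Ns$, $\sum_k x_k = \binom{s+1}{2}$, $\sum_k \binom{k}{2}x_k \leq \binom{N}{2}$, combined with $\binom{k}{2}\geq k-1$ --- does force $x_0=0$, $x_k=0$ for $k\geq 3$, and equality in the pair count, which pins down the structure in both cases at once and entirely sidesteps the question of which special lines were originally defining lines. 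The peeling re-presentation you invoke to realize the $M_j$ as the defining lines is legitimate: since $x_{\geq 3}=0$ the intersection points are distinct, so $(\X\cap M_{\sigma(i)})\setminus\bigcup_{j>i}M_{\sigma(j)}$ has exactly $i$ points and conditions (1)--(3) of Definition \ref{kdefn} are immediate. This is a real simplification over the paper's argument.

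Case $(iii)$, however, contains a genuine gap: the step you yourself identify as ``the crux'' --- re-rooting when a special line $M$ is not among the defining lines --- is described as a plan but never carried out, and the plan as stated (``choosing as new apex a point lying on no special line,'' worrying about ``degenerate concurrences'') is aimed at the wrong difficulty. The observation that closes the gap is much simpler and you do not state it: condition (3) of Definition \ref{kdefn} places \emph{no} avoidance constraint on $\L_1$, and Lemma \ref{extraline} guarantees that the extra special line $M$ contains $\X_1=\{P\}$; hence one may replace $\L_1$ by $M$ \emph{keeping every subset $\X_i$ unchanged}, with nothing to verify. If there are two extra special lines, do this once, apply Corollary \ref{relabelcor} to push the special defining lines to the end (possible since fewer than $s$ defining lines are special), and repeat for the second extra line, which by Lemma \ref{extraline} passes through the new apex of the relabelled presentation. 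After these swaps \emph{every} special line is a defining line, so your worry that the new apex might still lie on a non-defining special line evaporates --- there are none left --- and one final application of Corollary \ref{relabelcor} puts the $r$ special lines at positions $s-r+1,\ldots,s$, whose avoidance of the point of $\X_1$ is then exactly Definition \ref{kdefn}(3). Without this replacement argument (which is what the paper's $(c)\Rightarrow(b)\Rightarrow(a)$ reduction amounts to), your proof of $(iii)$ is incomplete.
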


\begin{proof}

By Corollary \ref{numberoflines} there are 
at most $s+1$ lines that contain $s$ points of $\X$.
So, there are three cases:
$(i)$ exactly $s+1$ lines that contain $s$ point of $\X$, $(ii)$ exactly
$s$ lines that contain $s$ points of $\X$, or $(iii)$ $1\leq r < s$ lines
that contain $s$ points of $\X$.  We now show
that in each case, we can label the $\X_i$'s and $\L_i$'s  as described
in the statement.

$(i)$
  Suppose that there are exactly $s+1$ lines that contain $s$ points
  of $\X$. If $s=2$, then the hypothesis that $\X$ is a
  $\k$-configuration implies that the three points of $\X$ are not
  colinear. Thus each pair of points of $\X$ uniquely determines a
  line, and the points of $\X$ are the intersections of such
  lines. Now suppose that $s>2$, and let $\HH_1,\dots,\HH_s,\HH_{s+1}$
  be the $s+1$ lines each passing through $s$ points of $\X$. Define
  the set of points $\Y := \X \setminus \HH_{s+1}$, so that
  $\X = \Y \cup (\X \cap \HH_{s+1})$. We have
  $|\Y| = \binom{s+1}{2} -s = \binom{s}{2}$. Each line
  $\HH_1,\dots,\HH_s$ passes through $s-1$ points of $\Y$, otherwise
  we would have $|\Y|>\binom{s}{2}$. By induction on $s$, the points
  of $\Y$ are the pairwise intersections of the lines
  $\HH_1,\dots,\HH_s$. By cardinality considerations, the $s$ points
  of $\X \cap \HH_{s+1}$ must be the intersections of $\HH_{s+1}$ with
  the lines $\HH_1,\dots,\HH_s$. This shows that the points of $\X$
  are precisely the pairwise intersections of the lines
  $\HH_1,\dots,\HH_s,\HH_{s+1}$.

$(ii)$
  Suppose that there are exactly $s$ lines that contain $s$ points of $\X$.
There are three subcases: $(a)$ the $s$ lines that contain
$s$ points are $\L_1,\ldots,\L_s$;
$(b)$ $s-1$ of the lines that contain $s$ points are 
among $\L_1,\ldots,\L_s$ and there is one more line $\L$; and 
$(c)$ $s-2$ of the lines that contain $s$ points are among
$\L_1,\ldots,\L_s$, and there are two more lines that contain
$s$ points.  Note that Lemma \ref{extraline} implies that there is at
most two lines not among the $\L_i$'s that will contain $s$ points, so
these are the only three cases.
We will first show that if $(c)$ is true, then we can relabel the lines and
points so that we can assume case $(b)$ is true.  We will then show
that in case $(b)$, we can again relabel lines and points so
we can assume case $(a)$ is true.

Assume case $(c)$ holds.  By Lemma \ref{extraline}, the two
lines that contain $s$ points that are not among the $\L_i$'s are
the lines $\L_{PQ_1}$ and $\L_{PQ_2}$ where
$\X_1 = \{P\}$ and $\X_2 = \{Q_1,Q_2\}$.  As argued in Corollary
\ref{numberoflines}, the line $\L_1$ cannot contain $s$ points.  
Since $\{P\} = \X_1 \subseteq \L_{PQ_1}$,
we then have that the $\k$-configuration can also be defined  by
the same $\X_i$'s and the lines $\L_{PQ_1},\L_2,\ldots,\L_s$.  Note that
we are in now case $(b)$.

We now assume case $(b)$, that is, $s-1$ of the lines that
contain $s$ points are among $\L_1,\ldots,\L_s$ and there is one additional 
line $\L$ that contains $s$ points.   Suppose that $\L_1$ does not
contain $s$ points.  By Lemma \ref{extraline}, the additional
line $\L$ contains $\X_1$, so as above, we replace $\L_1$ with $\L$,
and the $\k$-configuration is defined by the same $\X_i$'s and the lines
$\L,\L_2,\ldots,\L_s$, all of which contain $s$ points.  On the
other hand, suppose $\L_1$ contains $s$ points.
Then there is exactly
one line 
$\L_j \in \{\L_2,\ldots,\L_{s-1}\}$ that does not contain $s$ points (note
that $\L_s$ contains $s$ points).
Moreover,
$\L_1,\ldots,\L_{j-1}$ must all intersect $\L_j$ at distinct
points since each such $\L_i$ needs to contain $s$ distinct points.

Set 
\begin{eqnarray*}
\mathbb{T} & = & \L \cap (\X_1 \cup \cdots \cup\X_j).
\end{eqnarray*}
Since $\L$ contains $s$ points of $\X$, we must have
$\L \cap \X_i \neq \varnothing$ for all $i=1,\ldots,s$,
and in particular, $|\mathbb{T}| = j$.
Then the  $\k$-configuration $\X$ can also be defined using the subsets
\begin{eqnarray*}
 \X'_i &=& (\X_i \setminus \L) \cup (\L_i \cap \L_j) ~~\mbox{and}~~
\L'_i = \L_i ~~\mbox{for $i=1,\ldots,j-1$,} \\
\X_j & = & \mathbb{T} ~~ \mbox{and}~~ \L'_j = \L, ~~\mbox{and}\\
\X'_i & =&\X_i ~~\mbox{and}~~ \L'_i = \L_i ~~\mbox{for $i = j+1,\ldots,s$}.
\end{eqnarray*}
The verification of this fact is similar to the proof of Lemma
\ref{relabel1}.
Note that the line $\L_j$ is no longer used to define the $\k$-configuration;
moreover, the $s$ lines that contain the $s$ points are 
$\L'_1,\ldots,\L'_s$ after this relabelling,  i.e.,
we are now in case $(a)$.   
We have now verified that we can assume that the lines
that contain $s$ points of $\X$ are exactly the lines $\L_1,\ldots,\L_s$.  
We now verify the second part of $(ii)$.

  Now, for each $i=1,\dots,s$, the line $\L_i$ contains exactly $s$
  points of $\X$, so at least one of the $s$ points in $\X \cap \L_i$
  does not belong to $\L_j$ for $j\neq i$; call this point $P_i$. For
  $i=1,\dots,s-1$, set $\Y_i := \X_{i+1} \setminus \{P_{i+1}\}$. The
  set $\Y := \bigcup_{i=1}^{s-1} \Y_i$ is a $\k$-configuration of type
  $(1,\dots,s-1)$ with supporting lines $\L_{i+1} \supseteq \Y_i$
  (this follows from the fact that $\X$ is a $\k$-configuration with
  supporting lines $\L_i \supseteq \X_i$). Furthermore, there are
  exactly $s$ lines that contain $s-1$ points of $\Y$, namely the
  lines $\L_1,\dots,\L_s$. Therefore, by part $(i)$, all points
  of $\Y$ are precisely the pairwise intersections of the lines
  $\L_1,\dots,\L_s$. The statement in part $(ii)$ follows.

$(iii)$  
Finally, suppose that there are $1 \leq r < s$ lines that contain
$s$ points of $\X$.   Like case $(ii)$, there are three subcases:
$(a)$ the $r$ lines are among $\L_1,\ldots,\L_s$; $(b)$ $r-1$ of 
the lines are among $\L_1,\ldots,\L_s$, and there is one 
additional line $\L$, or $(c)$ $r-2$ of the lines are among $\L_1,\ldots,
\L_s$, and there are two additional lines that contains $s$ points.  
Like case $(ii)$, we first show that we can relabel case $(c)$ 
so case $(b)$ is true.  We then show that if case $(b)$ is true,
we can again relabel so case $(a)$ is true.

If we assume case $(c)$, we first apply Corollary \ref{relabelcor}
to relabel the lines so
that $\L_1$ does not contain $s$ points (since only the last $r-2$
lines will contain $s$ points).  Lemma
\ref{extraline} implies that the two additional 
lines are $\L_{PQ_1},\L_{PQ_2}$.  Since $\X_1 \subseteq \L_{PQ_1}$ we
can still define the $\k$-configuration using the same $\X_i$'s, but with
the lines $\L_{PQ_1},\L_2,\ldots,\L_s$,  i.e., we are in case $(b)$.

In case $(b)$, we again first apply Corollary \ref{relabelcor}
to relabel our $\k$-configuration so that $\L_1$ does not contain 
$s$ points.  By Lemma \ref{extraline}, the additional line $\L$
is either $\L_{PQ_1}$ or $\L_{PQ_2}$.  In either case, $\X_1 \subseteq \L$,
so we again define the $\k$-configuration using the the
same $\X_i$'s and the lines $\L,\L_2,\ldots,\L_s$.  We have now
relabelled the $\k$-configuration so case $(a)$ holds.

Since we can assume that $(a)$ holds, the $1 \leq r < s$ lines that
contains $s$ points are among $\L_1,\ldots,\L_s$.  Again, by
applying Corollary \ref{relabelcor}, we can assume that
$\L_{s-r+1},\ldots,\L_s$ are the $r$ lines with $s$ points, and
in particular, none of these points contain $\X_1 = \{P\}$ by definition
of a $\k$-configuration.

\end{proof}

\subsection{Case 1: Exactly \texorpdfstring{$s+1$}{s+1} lines}
We will now consider the three cases of Lemma \ref{relabel2} separately.
We first consider the case that there are exactly $s+1$ lines 
that contain $s$ points of $\X$.

\begin{thm}\label{case=s+1}
  Let $\X \subseteq \mathbb{P}^2$ be a $\k$-configuration of type
  $d=(d_1,\ldots,d_s) = (1,2,\ldots,s)$ with $s \geq 2$.  Assume that there
  are exactly $s+1$ lines containing $s$ points of $\X$. 
  Then
  $s+1 = \Delta \H_{m\X} (m d_s -1)$ for all $m \geq 2$.
\end{thm}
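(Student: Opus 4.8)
The plan is to mirror the proof of Theorem~\ref{bigmult}: I will use the Cooper--Harbourne--Teitler bounds of Theorem~\ref{CHT} to compute $\H_{m\X}(ms-1)$ and $\H_{m\X}(ms-2)$, and then take their difference. By Lemma~\ref{relabel2}$(i)$, the hypothesis forces $\X$ into a very rigid position: there are $s+1$ lines $\HH_1,\dots,\HH_{s+1}$, no three concurrent (otherwise the pairwise-intersection count $\binom{s+1}{2}$ would fail), and the points of $\X$ are exactly their pairwise intersections, a star configuration. In particular each point of $\X$ lies on precisely two of these lines and each $\HH_i$ meets $\X$ in exactly $s$ points. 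I will compute the reduction vector of $m\X$ with respect to the sequence of lines obtained by listing $\HH_1,\dots,\HH_{s+1}$ and repeating this block until the residual is empty.

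The core of the argument is identifying the reduction vector $\mathbf v$. The key structural fact is that, since each point lies on exactly two of the $\HH_i$, one complete pass through $\HH_1,\dots,\HH_{s+1}$ lowers every multiplicity by $2$; that is, after one block the residual scheme is $(m-2)\X$. Consequently, in the block that starts at multiplicity $\mu$, removing the $i$-th line $\HH_i$ leaves $i-1$ points of multiplicity $\mu-1$ and $s+1-i$ points of multiplicity $\mu$ on $\HH_i$, so the corresponding entry is $\mu s-(i-1)$. Writing the global index of this entry in block $j$ (where $\mu=m-2j$) as $I=j(s+1)+i$, a short computation gives $v_I=(ms-I+1)-j(s-1)$. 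Hence $v_I\le ms-I+1$ for every $I$, with equality \emph{only} in the first block $j=0$, which contributes exactly $s+1$ entries $v_1,\dots,v_{s+1}$ with $v_i=ms-i+1$. I expect the only delicate point to be the parity of $m$: for odd $m$ the final block (multiplicity $1$) uses only the first $s$ lines, since the last line then meets nothing; one checks these short-block entries still satisfy the strict inequality $v_I\le ms-I$, so they do not disturb the count of equality cases, and that the residual is empty after this block.

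With $\mathbf v$ in hand, the two Hilbert-function values follow as in Theorem~\ref{bigmult}. Since $v_I\le ms-I+1$ throughout, the lower bound $f_{\mathbf v}(ms-1)$ is the full sum $\sum_I v_I$, which equals $\deg(m\X)$ by \cite[Remark~1.2.6]{CHT}; combined with $\H_{m\X}(ms-1)\le\deg(m\X)$ \cite{CTV}, this forces $\H_{m\X}(ms-1)=\deg(m\X)$. For degree $ms-2$, the cap $ms-1-I'$ in the lower bound is active exactly on the $s+1$ equality entries of block $0$ and nowhere else, so $f_{\mathbf v}(ms-2)=\deg(m\X)-(s+1)$. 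For the matching upper bound I will evaluate $F_{\mathbf v}(ms-2)$ at $i=s+1$ and use the telescoping identity $\binom{ms}{2}-\binom{ms-s-1}{2}=\sum_{h=ms-s-1}^{ms-1}h=\big(\sum_{j=1}^{s+1}v_j\big)-(s+1)$, which yields $F_{\mathbf v}(ms-2)\le\deg(m\X)-(s+1)$. Squeezing the two bounds gives $\H_{m\X}(ms-2)=\deg(m\X)-(s+1)$.

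Subtracting the two values then gives $\Delta\H_{m\X}(ms-1)=\deg(m\X)-\big(\deg(m\X)-(s+1)\big)=s+1$, as claimed. The heart of the argument, and the only place the star-configuration hypothesis of case $(i)$ is genuinely used, is the reduction-vector identity $v_I=(ms-I+1)-j(s-1)$: it is what singles out the first block as the unique source of the $s+1$ equality cases, and hence what produces the answer $s+1$. The main obstacle is therefore making this identity, together with the parity correction for the final block when $m$ is odd, completely rigorous.
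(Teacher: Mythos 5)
Your proposal is correct and follows essentially the same route as the paper's proof: Lemma~\ref{relabel2}$(i)$ to reduce to the star configuration, the Cooper--Harbourne--Teitler reduction vector obtained from repeated passes through the $s+1$ lines (your identity $v_I=(ms-I+1)-j(s-1)$ is algebraically the same as the paper's $v_{j(s+1)+i}=(m-2j)s-i+1$), and the same squeeze of $f_{\mathbf v}$ and $F_{\mathbf v}$ at degrees $ms-2$ and $ms-1$. The parity point you flag is handled in the paper simply by running $\lceil m/2\rceil$ full blocks and letting the last entry of the reduction vector be zero when $m$ is odd, which does not affect the count of equality cases.
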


\begin{proof}
  Let $\HH_1,\dots,\HH_s,\HH_{s+1}$ be the lines containing $s$ points
  of $\X$; by Lemma \ref{relabel2} $(i)$, the points of $\X$ are
  precisely the intersections of such lines.

  To compute bounds on the Hilbert function of $m\X$, we apply Theorem
  \ref{CHT} with the reduction vector $\bf v$ obtained from the
  sequence of lines
  \begin{equation*}
    \HH_{s+1}, \HH_s, \dots, \HH_1,
    \HH_{s+1}, \HH_s, \dots, \HH_1,
    \dots,
    \HH_{s+1}, \HH_s, \dots, \HH_1,
  \end{equation*}
  where the subsequence $\HH_{s+1}, \HH_s, \dots, \HH_1$ is repeated
  $\lceil \frac{m}{2}\rceil$ times.

  We index the entries of the reduction vector by $v_{j(s+1)+i}$,
  where $j=0,\dots,\lceil \frac{m}{2}\rceil -1$ is the number of times
  the subsequence of lines $\HH_{s+1}, \HH_s, \dots, \HH_1$ has been
  completely removed, and $i=1,\dots,s+1$ indicates that we are going
  to remove the line $\HH_{s-i+2}$. Note that each time the
  subsequence $\HH_{s+1}, \HH_s, \dots, \HH_1$ is removed, the
  multiplicity of each point of $m\X$ decreases by two. If
  $m$ is even, this process eventually reduces the multiplicity of
  each point to zero. If $m$ is odd, then the process reduces the
  multiplicity of each point to one, so removing the sequence of lines
  $\HH_{s+1}, \HH_s, \dots, \HH_1$ one more time reduces the
  multiplicity to zero. In particular, $m\X$ will be reduced to
  $\varnothing$ after removing the subsequence of lines
  $\HH_{s+1}, \HH_s, \dots, \HH_1$ $\lceil \frac{m}{2}\rceil$
  times. At the step corresponding to $v_{j(s+1)+i}$, the line
  $\HH_{s-i+2}$ contains:
  \begin{itemize}
  \item the points of intersection $\HH_{s-i+2} \cap \HH_k$ for
    $k>s-i+2$, with multiplicity $m-2j-1$;
  \item the points of intersection $\HH_{s-i+2} \cap \HH_k$ for
    $k<s-i+2$, with multiplicity $m-2j$.
  \end{itemize}
  This gives
  \begin{equation}
    \label{eq:star_red_vec}
    \begin{split}
      v_{j(s+1)+i} &= (i-1) (m-2j-1) + (s-i+1) (m-2j)\\
      &= (m-2j)s-i+1.
  \end{split}
  \end{equation}

  When $j=0$, Equation \eqref{eq:star_red_vec} implies
  \begin{equation*}
    v_i = ms -i+1,
  \end{equation*}
  for all $i=1,\dots,s+1$. For $j>0$, we get
  \begin{equation*}
    \begin{split}
      v_{j(s+1)+i} &= (m-2j)s -i+1 = ms -2js -i+1\\
      &< ms -j(s+1) -i+1
    \end{split}
  \end{equation*}
  because $s>1$. This shows that
  \begin{equation*}
    v_{j(s+1)+i} \leq ms -(j(s+1)+i)
  \end{equation*}
  for all $j=1,\dots,\lceil \frac{m}{2}\rceil -1$ and $i=1,\dots,s+1$.

  Since $d_s =s$, we can summarize the results above by writing
  \begin{align*}
      &v_i = m d_s -i +1,& &\mbox{for $i=1,\ldots,s+1$},\\
      &v_i \leq m d_s -i,& &\mbox{for $i=s+2,\ldots,\lceil \frac{m}{2}\rceil (s+1)$}.
  \end{align*}
  Proceeding as in the proof of Theorem \ref{bigmult}, we obtain
  \begin{equation*}
    \H_{m\X} (m d_s -2) = \sum_{i=0}^s (m d_s -1-i) +
    \sum_{i=s+2}^{\lceil \frac{m}{2}\rceil (s+1)-1} v_{i+1}.
  \end{equation*}
  Also as in the proof of Theorem \ref{bigmult}, we have
  \begin{equation*}
    \H_{m\X} (m d_s -1) = \deg (m\X) =
    \sum_{i=0}^{\lceil \frac{m}{2}\rceil (s+1)-1} v_{i+1} =
    \sum_{i=0}^s (m d_s -i) +
    \sum_{i=s+2}^{\lceil \frac{m}{2}\rceil (s+1)-1} v_{i+1}.
  \end{equation*}
  We conclude that
  \begin{equation*}
    \Delta \H_{m\X} (m d_s -1)
    = \H_{m\X} (m d_s -1) - \H_{m\X} (m d_s -2) 
    = s+1.
  \end{equation*}
\end{proof}

\begin{rem}
A $\k$-configuration of type $(1,2,\ldots,s)$ which has exactly $s+1$
lines containing $s$ points is also an example of a star configuration.
When $m=2$, Theorem \ref{case=s+1} can be deduced from \cite[Theorem 3.2]{GHM}.
\end{rem}

\subsection{Case 2: Exactly \texorpdfstring{$s$}{s} lines}
We next consider the case that there are exactly $s$ lines containing
$s$ points. Reasoning as in the previous case, we may compute a
reduction vector from these $s$ lines, in order to calculate values of
the Hilbert function. However, in this case, the bounds thus obtained
may not be tight. The following example illustrates the issue, and a
possible workaround.

\begin{exmp}
  Consider a $\k$-configuration $\X$ of type $(1,2,3,4)$ with exactly
  four lines that contain four points of $\X$. By Lemma
  \ref{relabel2} $(ii)$, $\X$ consists of the intersections of the lines
  $\L_1,\L_2,\L_3,\L_4$ defining the $\k$-configuration, and four non-colinear points $P_1,P_2,P_3,P_4$, with $P_i$ belonging to $\L_i$. We
  have depicted such an $\X$ in Figure \ref{fig:issue_s_lines}.

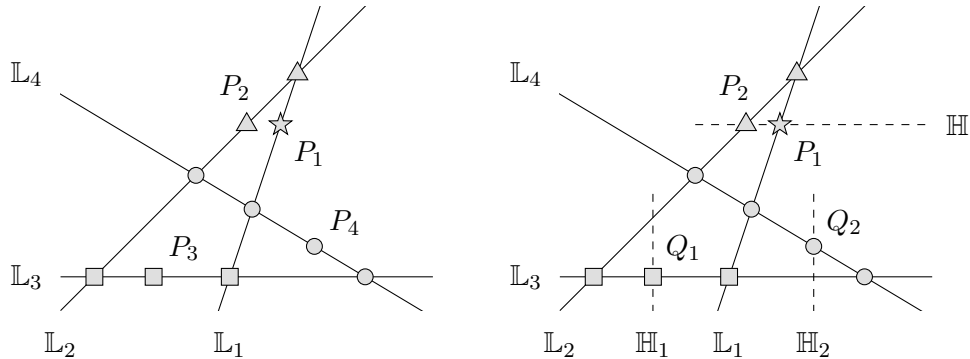
\begin{figure}[ht]
  \centering
    \begin{tikzpicture}[scale=0.45]
      \node at (-2,6) {$\L_4$};
      \node at (-2,0) {$\L_3$};
      \node at (-1,-2) {$\L_2$};
      \node at (4,-2) {$\L_1$};
      \node at (6.35,3.65) {$P_1$};
      \node at (4.15,5.6) {$P_2$};
      \node at (2.65,0.85) {$P_3$};
      \node at (7.4,1.6) {$P_4$};
      \clip (-1,-1) rectangle (10,8);
      \draw (-1,0)--(10,0);
      \draw (-1,-1)--(10,10);
      \draw (-2,6)--(13,-3); 
      \draw (2,-6)--(8,12); 
      \node[my circle] at (8,0) {};
      \node[my circle] at (4.66,2) {};
      \node[my circle] at (3,3) {};
      \node[my circle] at (6.5,0.9) {};
      \node[my square] at (0,0) {};
      \node[my square] at (4,0) {};
      \node[my square] at (1.75,0) {};
      \node[my triangle] at (6,6) {};
      \node[my triangle] at (4.5,4.5) {};
      \node[my star] at (5.5,4.5) {};
    \end{tikzpicture}
    \qquad
    \begin{tikzpicture}[scale=0.45]
      \node at (-2,6) {$\L_4$};
      \node at (-2,0) {$\L_3$};
      \node at (-1,-2) {$\L_2$};
      \node at (4,-2) {$\L_1$};
      \node at (10.75,4.5) {$\HH$};
      \node at (1.75,-2) {$\HH_1$};
      \node at (6.5,-2) {$\HH_2$};
      \node at (6.35,3.65) {$P_1$};
      \node at (4.15,5.6) {$P_2$};
      \node at (2.65,0.85) {$Q_1$};
      \node at (7.4,1.6) {$Q_2$};
      \clip (-1,-1) rectangle (10,8);
      \draw (-1,0)--(10,0);
      \draw (-1,-1)--(10,10);
      \draw (-2,6)--(13,-3); 
      \draw (2,-6)--(8,12); 
      \draw[dashed] (3,4.5)--(12,4.5);
      \draw[dashed] (1.75,-2)--(1.75,2.5);
      \draw[dashed] (6.5,-2)--(6.5,2.5);
      \node[my circle] at (8,0) {};
      \node[my circle] at (4.66,2) {};
      \node[my circle] at (3,3) {};
      \node[my circle] at (6.5,0.9) {};
      \node[my square] at (0,0) {};
      \node[my square] at (4,0) {};
      \node[my square] at (1.75,0) {};
      \node[my triangle] at (6,6) {};
      \node[my triangle] at (4.5,4.5) {};
      \node[my star] at (5.5,4.5) {};
    \end{tikzpicture}
\caption{A $\k$-configuration of type $(1,2,3,4)$ with exactly 4 lines containing 4 points}

\label{fig:issue_s_lines}
\end{figure}

We proceed to compute bounds for $\H_{2\X} (2 d_s -2)$ as we did in
Example \ref{ex:CHT}. First we use the sequence of lines
$\L_4,\L_3,\L_2,\L_1,\L_4,\L_3,\L_2,\L_1$. The table below compares
the function $(2 d_s -2)-i+1$ with the entries of the reduction vector
$\bf v$; the minimum is in bold.
  \begin{center}
    \begin{tabular}{|c|c|c|c|c|c|c|c|c|c|c|}
      \hline 
      $i$ &   0 & 1 & 2 & 3 & 4 & 5 & 6 & 7  \\ \hline \hline
      $6-i+1$ & \bf 7 & \bf 6 & \bf 5 & \bf 4 & 3 & 2 & 1 & \bf 0 \\ \hline 
      $v_{i+1}$ & 8 & 7 & 6 & 5 & \bf 1 & \bf 1 & \bf 1 & 1 \\ \hline
    \end{tabular}
  \end{center}
  Summing the minimum values, we obtain the lower bound
  $\H_{2\X} (6) \geq f_{\bf v} (6) = 25$.

  Now let $\HH$ be the line through $P_1$ and $P_2$.  In general, the
  line $\HH$ could also contain $P_3$ or $P_4$, but not both. However
  in the $\k$-configuration depicted in Figure \ref{fig:issue_s_lines}
  $\HH$ does not contain either $P_3$ or $P_4$.  Consider the points
  of the set $\{P_1,P_2,P_3,P_4\} \setminus \HH$, namely $P_3$ and
  $P_4$, and relabel them $Q_1$ and $Q_2$. Let $\HH_1$ be a line
  through $Q_1$ not passing through $Q_2$, and let $\HH_2$ be a line
  through $Q_2$. We compute a lower bound for $\H_{2\X} (2 d_s -2)$
  using the sequence of lines
  $\L_4,\L_3,\L_2,\L_1,\HH,\HH_1,\HH_2$. The table below summarizes
  the necessary information.
  \begin{center}
    \begin{tabular}{|c|c|c|c|c|c|c|c|c|c|}
      \hline 
      $i$ &   0 & 1 & 2 & 3 & 4 & 5 & 6  \\ \hline \hline
      $6-i+1$ & \bf 7 & \bf 6 & \bf 5 & \bf 4 & 3 & 2 & 1 \\ \hline 
      $v_{i+1}$ & 8 & 7 & 6 & 5 & \bf 2 & \bf 1 & \bf 1 \\ \hline
    \end{tabular}
  \end{center}
  Summing the minimum values, we obtain the lower bound
  $\H_{2\X} (6) \geq f_{\bf v} (6) = 26$.

  An easy computation with Equation \eqref{upperbound} (using either
  reduction vector) leads to the upper bound
  $\H_{2\X} (6) \leq F_{\bf v} (6) \leq 26$. This shows that
  $\H_{2\X} (6) = 26$. In particular, the lower bound computed from
  the lines $\L_4,\L_3,\L_2,\L_1$ alone is not tight.
\end{exmp}

Using the above example as a guide, we prove our main result for the
case under consideration.

\begin{thm}\label{case=s}
  Let $\X \subseteq \mathbb{P}^2$ be a $\k$-configuration of type
  $d=(d_1,\ldots,d_s) = (1,2,\ldots,s)$ with $s\geq 2$.  Assume that there
  are exactly $s$ lines containing $s$ points of $\X$. 
  Then
  $s = \Delta \H_{m\X} (m d_s -1)$ for all $m \geq 2$.
\end{thm}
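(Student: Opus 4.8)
The plan is to imitate the proof of Theorem \ref{bigmult}: construct an explicit reduction vector $\mathbf{v}$ for $m\X$, use Theorem \ref{CHT} to compute exactly the two values $\H_{m\X}(ms-1)$ and $\H_{m\X}(ms-2)$, and subtract. Since $d_s=s$, the desired identity is $\Delta\H_{m\X}(ms-1)=s$. The first step is to apply Lemma \ref{relabel2} $(ii)$, so that I may assume the $s$ lines carrying $s$ points are exactly $\L_1,\dots,\L_s$, and that $\X$ consists of the $\binom{s}{2}$ pairwise intersections $\L_i\cap\L_j$ together with $s$ extra points $P_1,\dots,P_s$, where $P_i$ lies on $\L_i$ but on no other defining line.

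Next I would assemble the reduction vector in $m$ stages. The first $m-1$ stages each remove $\L_s,\L_{s-1},\dots,\L_1$ in this order, exactly as in Theorem \ref{bigmult}. Tracking multiplicities (each intersection point lies on two of the removed lines, each $P_c$ on only one) shows that in the $j$-th stage ($j=0,\dots,m-2$, $i=1,\dots,s$),
\[
v_{js+i}=(m-j)+(i-1)\max(m-2j-1,0)+(s-i)\max(m-2j,0).
\]
In particular the stage $j=0$ gives $v_i=ms-i+1$ for $i=1,\dots,s$, and a short computation splitting into the cases $m-2j>0$ and $m-2j\le 0$ shows $v_{js+i}\le ms-(js+i)$ for every later entry of these stages; here $s\ge 2$ plays the role that $d_s>s$ played in the analogous estimate in Theorem \ref{bigmult}.

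The crucial final stage is where naive repetition of $\L_s,\dots,\L_1$ breaks down: after $m-1$ stages every intersection point has multiplicity $0$ and each $P_i$ has multiplicity $1$, so one more pass of $\L_s,\dots,\L_1$ would contribute $s$ entries equal to $1$, and the last of these violates the bound $v_{ms}\le ms-ms=0$. To repair this I would change the last stage as follows. The points $P_1,\dots,P_s$ cannot all be collinear, since a line through at least two of them is none of the $\L_j$ (each $P_i$ lies only on $\L_i$), so were such a line to carry $s$ points of $\X$ it would be a forbidden $(s+1)$-st line with $s$ points. Hence I may pick a line $\HH$ through two of the $P_i$; it meets $\{P_1,\dots,P_s\}$ in some $t$ points with $2\le t\le s-1$. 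Removing $\HH$ first (an entry equal to $t$) and then the remaining $s-t$ points one line at a time produces last-stage entries $t,1,\dots,1$, which satisfy $v_{(m-1)s+\ell}\le s-\ell$ precisely because $t\ge 2$. This yields a complete reduction vector with
\[
v_i=ms-i+1\ \text{ for }i=1,\dots,s,\qquad v_i\le ms-i\ \text{ for }i>s.
\]

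With this $\mathbf{v}$ in hand the conclusion follows verbatim as in Theorem \ref{bigmult}: the lower and upper bounds of Theorem \ref{CHT} at $t=ms-2$ coincide (take $i=s$ in the upper bound), giving $\H_{m\X}(ms-2)=\deg(m\X)-s$; and since $\mathbf{v}$ is complete, $f_{\mathbf{v}}(ms-1)$ equals the sum of its entries, namely $\deg(m\X)$, which forces $\H_{m\X}(ms-1)=\deg(m\X)$ because $\H_{m\X}\le\deg(m\X)$ always. Subtracting gives $\Delta\H_{m\X}(ms-1)=s$. I expect the main obstacle to be the last-stage modification together with the verification that the single extra line $\HH$ lowers the tail of the reduction vector by just the right amount; the remaining bookkeeping is a rerun of the computation already carried out in Theorem \ref{bigmult}.
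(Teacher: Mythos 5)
Your proposal is correct and follows essentially the same route as the paper's proof: relabel via Lemma \ref{relabel2} $(ii)$, run the pass $\L_s,\dots,\L_1$ a total of $m-1$ times, and then finish with a line $\HH$ through two of the points $P_1,\dots,P_s$ (the paper takes the line through $P_1$ and $P_2$, containing $u=t$ of them with $2\le u\le s-1$) followed by one auxiliary line per remaining $P_i$, which is exactly the paper's repair of the tail of the reduction vector. The only differences are cosmetic: you package the multiplicity bookkeeping into a single $\max$-formula where the paper splits into the cases $j<\lceil m/2\rceil$ and $j\ge\lceil m/2\rceil$.
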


\begin{proof}
  By Lemma \ref{relabel2} $(ii)$, we can assume that the lines
  containing $s$ points of $\X$ are the lines $\L_1,\ldots,\L_s$ that
  define the $\k$-configuration. Moreover, for each $i=1,\dots,s$,
  there is a point $P_i \in \X\cap \L_i$ that does not belong to
  $\L_j$ for any $j\neq i$. Then the points of $\X$ are the points of
  intersection of the lines $\L_1,\dots,\L_s$ together with the points
  $P_1,\dots,P_s$.

  To compute bounds on the Hilbert function of $m\X$, we apply Theorem
  \ref{CHT} with the reduction vector $\bf v$ obtained from a
  sequence of lines
  \begin{equation*}
    \L_s,\dots,\L_1,
    \L_s,\dots,\L_1,
    \dots,
    \L_s,\dots,\L_1,
    \HH, \HH_1, \dots, \HH_{s-u},
  \end{equation*}
  where the subsequence $\L_s,\dots,\L_1$ is repeated $m-1$ times and
  the additional lines $\HH, \HH_1, \dots, \HH_{s-u}$ are constructed
  as follows.

  Let $\HH$ denote the line through $P_1$ and $P_2$. The line $\HH$
  contains $u$ points of the set $\{P_1,\dots,P_s\}$, where, by
  construction, $u\geq 2$. Furthermore, $\HH$ is not one of the lines
  $\L_1,\dots,\L_s$, and therefore, it cannot contain $s$ points of $\X$,
  i.e.,\ $u\leq s-1$. It follows that the set
  $\{P_1,\dots,P_s\} \setminus \HH$ is not empty, and must in fact
  contain $s-u$ points, which we denote $Q_1,\dots,Q_{s-u}$. For each
  $i=1,\dots,s-u$, let $\HH_i$ be a line passing through $Q_i$ that
  does not contain any point $Q_j$ for $j>i$. 

  Now we proceed to compute (or bound) the entries of the reduction
  vector $\bf v$.  We claim that, for $i=1,\dots,s$, we have
  \begin{equation*}
    v_i = m s -i+1.
  \end{equation*}
  At the $i$-th step, the line $\L_{s-i+1}$ contains:
  \begin{itemize}
  \item the points of intersection $\L_{s-i+1} \cap \L_k$ for $k>s-i+1$, with multiplicity $m-1$;
  \item the points of intersection $\L_{s-i+1} \cap \L_k$ for $k<s-i+1$, with multiplicity $m$; and
  \item the point $P_{s-i+1}$ with multiplicity $m$.
  \end{itemize}
  This gives
  \begin{equation*}
    v_i = (m-1)(i-1) + m (s-i) + m = ms -i+1,
  \end{equation*}
  proving the claim.

  Next, we claim that, for $l=s+1,\dots,(m-1)s+s-u+1$, we have
  \begin{equation}
    \label{eq:claim}
    v_l \leq m s - l.
  \end{equation}

  We first prove this claim for entries $v_{js+i}$, where
  $j=1,\dots,\lceil \frac{m}{2} \rceil -1$ is the number of times the
  subsequence of lines $\L_s,\dots,\L_1$ has been completely removed,
  and $i=1,\dots,s$ indicates that we are going to remove the line
  $\L_{s-i+1}$. At the step corresponding to $v_{js+i}$, the
  line $\L_{s-i+1}$ contains:
  \begin{itemize}
  \item the points of intersection $\L_{s-i+1} \cap \L_k$ for
    $k>s-i+1$, with multiplicity $m-2j-1$;
  \item the points of intersection $\L_{s-i+1} \cap \L_k$ for
    $k<s-i+1$, with multiplicity $m-2j$; and
  \item the point $P_{s-i+1}$ with multiplicity $m-j$.
  \end{itemize}
  The $2j$ in the above multiplicities follows from the fact that
  points located at the intersections of the lines $\L_s,\dots,\L_1$
  are removed twice with each full pass along the subsequence
  $\L_s,\dots,\L_1$.  Thus we obtain
  \begin{equation*}
    \begin{split}
      v_{js+i}
      &= (m-2j-1)(i-1)+(m-2j)(s-i) +m-j\\
      &= ms -i+j-2js+1 = ms -js-i + j(1-s) +1\\
      &< ms -js -i +1,
    \end{split}
  \end{equation*}
  from which the claim of Equation \eqref{eq:claim} follows for the
  chosen values of $i$ and $j$.

  Next, we prove the claim for $v_{js+i}$, where
  $j=\lceil \frac{m}{2} \rceil, \dots, m-2$, and $i=1,\dots,s$. Since
  the lines $\L_s,\dots,\L_1$ have been removed
  $\lceil \frac{m}{2} \rceil$ times, the multiplicity of the points
  located at the intersections of the lines $\L_s,\dots,\L_1$ is now
  zero. Hence, at the step corresponding to $v_{js+i}$, the line
  $\L_{s-i+1}$ only contains the point $P_{s-i+1}$ with multiplicity
  $m-j$.  We get
  \begin{equation*}
    v_{js+i}
    = m-j. 
  \end{equation*}
  Since $j\leq m-2$, we have $m-j\geq 2$ and therefore
  \begin{equation*}
    \frac{m-j}{m-j-1} = \frac{m-j-1+1}{m-j-1} =
    1 + \frac{1}{m-j-1} \leq 2 \leq s.
  \end{equation*}
  This implies
  \begin{equation*}
    v_{js+i} \leq (m -j -1)s = ms -js -s \leq ms -js-i,
  \end{equation*}
  thus proving the claim of Equation \eqref{eq:claim} for the given
  $i$ and $j$.

  At this stage, the multiplicity of the points $P_1,\dots,P_s$ has
  been reduced to one, because each line $\L_1,\dots,\L_s$ has been
  removed $m-1$ times. The next step is to find the value of
  $v_{(m-1)s +1}$, which corresponds to the line $\HH$ defined
  at the beginning. By construction, $\HH$ contains $u$ points of the
  set $\{P_1,\dots,P_s\}$, with $u \leq s-1$. Therefore
  \begin{equation*}
    v_{(m-1)s +1} = u \leq s-1 = ms -((m-1)s+1);
  \end{equation*}
  this shows that Equation \eqref{eq:claim} holds for this entry of $\bf v$.

  Finally, we evaluate $v_{(m-1)s+h}$, for $h=2,\dots,s-u+1$. For a
  given value of $h$, we assume that we have already removed
  $\HH_1,\dots,\HH_{h-2}$ and we are about to remove $\HH_{h-1}$. The
  line $\HH_{h-1}$ contains a single point of $\X$, namely
  $Q_{h-1}$. Moreover, $Q_{h-1}$ is by definition one of the points in
  the set $\{P_1,\dots,P_s\}\setminus \HH$, so its multiplicity is
  down to one. Thus we have
  \begin{equation*}
    v_{(m-1)s+h} = 1 \leq s-h = ms - ((m-1)s +h).
  \end{equation*}
  The inequality $1\leq s-h$ follows from $h\leq s-u+1$ and $u\geq
  2$. Thus we have proved that Equation \eqref{eq:claim} holds for all
  the desired values.

  To summarize, we showed that
  \begin{align*}
      &v_i = ms -i +1,& &\mbox{for $i=1,\ldots,s$},\\
      &v_i \leq ms -i,& &\mbox{for $i=s+1,\ldots,(m-1)s+s-u+1$}.
  \end{align*}
  From here on, the proof proceeds as for Theorem \ref{bigmult}, yielding
  \begin{equation*}
    \begin{split}
      \Delta \H_{m\X} (ms -1)
      &= \H_{m\X} (ms -1) - \H_{m\X} (ms -2) \\
      &= \sum_{i=0}^{s-1} (ms -i) - \sum_{i=0}^{s-1} (ms -1-i)
      = \sum_{i=0}^{s-1} 1 = s.
    \end{split}
  \end{equation*}
\end{proof}

\subsection{Case 3: \texorpdfstring{$1 \leq r < s$}{1=<r<s} lines}
We consider the final case when there are $1 \leq r < s$ lines
that contain $s$ points of $\X$. 
Before going forward, we recall
a result of Catalisano, Trung, and Valla \cite[Lemma 3]{CTV}; we have
specialized this result to the case of points in $\mathbb{P}^2$.

\begin{lem}\label{CTVlemma}
Let $P_1,\ldots,P_k, P$ be distinct points in $\mathbb{P}^2$ and
let $I_P$ be the defining prime ideal of $P$.  
If $m_1,\ldots,m_k,$ and $a$ are positive integers and 
$I = I_{P_1}^{m_1} \cap \cdots \cap I_{P_k}^{m_k}$, then
\begin{enumerate}
\item[$(a)$] $\H_{R/(I+I_P^a)}(t) = 
\sum_{i=0}^{a-1} \dim_\k [(I+I_P^i)/(I+I_P^{i+1})]_t$ 
for every $t>0$, with $I^0_{P}=R$. 
\item[$(b)$] If $P = [1:0:0]$, then 
$[(I+I_P^i)/(I + I_P^{i+1})]_t = 0$ if and only  if $i > t$
or $x_0^{t-i}M \in I + I_P^{i+1}$ for every monomial $M$ of degree $i$ 
in $x_1,x_2$.
\end{enumerate}
\end{lem}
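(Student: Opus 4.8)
The plan is to treat the two parts separately, as the first is a general filtration argument while the second is a coordinate computation.

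For part $(a)$, I would exploit the descending chain of ideals
\[
R = I + I_P^0 \supseteq I + I_P^1 \supseteq \cdots \supseteq I + I_P^{a-1} \supseteq I + I_P^a,
\]
where each inclusion holds because $I_P^{i+1} \subseteq I_P^i$. Passing to the quotient $R/(I+I_P^a)$ and restricting to degree $t$, this chain induces a filtration of the finite-dimensional $\k$-vector space $[R/(I+I_P^a)]_t$ whose successive quotients are exactly $[(I+I_P^i)/(I+I_P^{i+1})]_t$ for $i=0,\dots,a-1$. Additivity of dimension across these successive quotients then gives
\[
\H_{R/(I+I_P^a)}(t) = \dim_\k [R/(I+I_P^a)]_t = \sum_{i=0}^{a-1} \dim_\k [(I+I_P^i)/(I+I_P^{i+1})]_t,
\]
which is the claim (the top term $i=0$ uses the convention $I_P^0 = R$, so that $I+I_P^0=R$).

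For part $(b)$, the key is that when $P = [1:0:0]$ we have $I_P = (x_1,x_2)$, so $I_P^i$ is the $\k$-span of all monomials whose total degree in $x_1$ and $x_2$ is at least $i$. I would first observe that the inclusion $I_P^i \hookrightarrow I + I_P^i$ induces a surjection onto $(I+I_P^i)/(I+I_P^{i+1})$, since $I + I_P^i = I + I_P^{i+1} + I_P^i$. Hence the degree-$t$ piece is spanned by the images of the degree-$t$ monomials lying in $I_P^i$. Any such monomial with $x_1,x_2$-degree at least $i+1$ already lies in $I_P^{i+1}$ and maps to zero, so only the monomials of $x_1,x_2$-degree exactly $i$ survive; these have the form $x_0^{t-i}M$ with $M$ a degree-$i$ monomial in $x_1,x_2$, and they exist precisely when $t \geq i$. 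Therefore, for $t \geq i$ the space $[(I+I_P^i)/(I+I_P^{i+1})]_t$ is generated by the classes of the elements $x_0^{t-i}M$, and it vanishes if and only if each generator vanishes, i.e. $x_0^{t-i}M \in I + I_P^{i+1}$ for every such $M$; while for $t < i$ (equivalently $i > t$) there are no degree-$t$ monomials in $I_P^i$ and the space is automatically zero. Combining these two regimes yields the stated equivalence.

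I expect no deep obstacle, as both statements are elementary once the right filtration and coordinates are fixed. The only point requiring care is the monomial bookkeeping in $(b)$: I must confirm that $I_t$ together with the degree-$t$ monomials of $I_P^i$ of $x_1,x_2$-degree exactly $i$ (namely the $x_0^{t-i}M$) genuinely span $(I+I_P^i)_t$ modulo $(I+I_P^{i+1})_t$, and that the degenerate range $t<i$ is handled by the separate clause $i>t$ rather than by the generator condition. Once this is verified, the ``only if'' direction is immediate (each $x_0^{t-i}M$ lies in $(I+I_P^i)_t$, so it must map to zero when the space vanishes), and the ``if'' direction follows directly from the spanning statement.
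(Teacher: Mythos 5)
Your proof is correct. The paper does not actually prove this lemma---it is imported verbatim (specialized to $\mathbb{P}^2$) from Catalisano--Trung--Valla \cite[Lemma 3]{CTV}---and your argument, the telescoping filtration $R=I+I_P^0\supseteq\cdots\supseteq I+I_P^a$ for part $(a)$ and the observation that $[(I+I_P^i)/(I+I_P^{i+1})]_t$ is spanned by the classes of the monomials $x_0^{t-i}M$ for part $(b)$, is exactly the standard argument underlying that reference, so there is nothing further to compare.
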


We now prove the remaining open case.  Note that unlike Theorems
\ref{case=s+1} and \ref{case=s}, we need to assume that $m \geq s+1$
instead of $m \geq 2$.

\begin{thm}\label{case<s}
  Let $\X \subseteq \mathbb{P}^2$ be a $\k$-configuration of type
  $d=(d_1,\ldots,d_s) = (1,2,\ldots,s)$ with $s\geq 2$.  Assume that there
  are $1 \leq r < s$  lines containing $s$ points of $\X$. 
  Then
  $r = \Delta \H_{m\X} (m d_s -1)$ for all $m \geq s+1$.
\end{thm}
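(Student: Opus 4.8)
The plan is to reduce this case to the already-established Theorem~\ref{bigmult} by deleting the single point $P=\X_1$, and then to control the resulting error term with Lemma~\ref{CTVlemma}. Using Lemma~\ref{relabel2}~$(iii)$ together with Corollary~\ref{relabelcor}, I would first relabel so that the $r$ lines meeting $\X$ in $s$ points are $\L_{s-r+1},\dots,\L_s$, none of which pass through the point $P$ of $\X_1$. Set $\X'=\X\setminus\{P\}=\X_2\cup\cdots\cup\X_s$. Then $\X'$ is a $\k$-configuration of type $(2,3,\dots,s)$; since its largest part $s$ exceeds its number of parts $s-1$, it falls under the case $d_s>s$ of Theorem~\ref{bigmult}. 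The lines $\L_{s-r+1},\dots,\L_s$ still meet $\X'$ in $s$ points (as $P$ lies on none of them), and deleting $P$ cannot create a new line through $s$ points; hence $\X'$ has exactly $r$ lines meeting it in $s$ points, and Theorem~\ref{bigmult} gives $\Delta\H_{m\X'}(ms-1)=r$ for all $m\geq 2$.

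Next I would relate $\H_{m\X}$ to $\H_{m\X'}$. Since $P\notin\X'$, we have $I_{m\X}=I_{m\X'}\cap I_P^m$, so the exact sequence
\[
0\to R/I_{m\X}\to R/I_{m\X'}\oplus R/I_P^m\to R/(I_{m\X'}+I_P^m)\to 0
\]
yields $\H_{m\X}(t)=\H_{m\X'}(t)+\H_{mP}(t)-\H_{R/(I_{m\X'}+I_P^m)}(t)$. Taking first differences at $t=ms-1$ and using that a single fat point satisfies $\H_{mP}(t)=\binom{m+1}{2}$ for $t\geq m-1$ (so $\Delta\H_{mP}(ms-1)=0$, as $ms-1\geq m$), I obtain
\[
\Delta\H_{m\X}(ms-1)=r-\Delta\H_{R/(I_{m\X'}+I_P^m)}(ms-1).
\]
Thus the theorem reduces to showing that the error term $\Delta\H_{R/(I_{m\X'}+I_P^m)}(ms-1)$ vanishes.

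Because $P$ is not in the support of $\X'$, the ideal $I_{m\X'}+I_P^m$ is primary to the irrelevant maximal ideal, so $R/(I_{m\X'}+I_P^m)$ has finite length; moreover, once a graded quotient of $R$ vanishes in one degree it vanishes in all higher degrees. Hence it suffices to prove $(R/(I_{m\X'}+I_P^m))_{ms-2}=0$. Choosing coordinates so that $P=[1:0:0]$ and $I_P=(x_1,x_2)$, I would apply Lemma~\ref{CTVlemma}: part $(a)$ writes this dimension as $\sum_{i=0}^{m-1}\dim_\k[(I_{m\X'}+I_P^i)/(I_{m\X'}+I_P^{i+1})]_{ms-2}$, and part $(b)$ reduces the vanishing of the $i$-th summand to the membership $x_0^{ms-2-i}M\in I_{m\X'}+I_P^{i+1}$ for every monomial $M$ of degree $i$ in $x_1,x_2$. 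Geometrically, each membership asks for a curve of degree $ms-2$ vanishing to order $m$ along $\X'$ whose $i$-jet at $P$ equals that of $x_0^{ms-2-i}M$. For $i\leq m-2$ this is easy: since $P$ lies on none of $\L_2,\dots,\L_s$, the form $(\ell_2\cdots\ell_s)^m$ (where $\ell_j$ cuts out $\L_j$) is a unit at $P$, and multiplying it by a suitable form of degree $m-2$ realizes the required jet while keeping multiplicity $m$ on $\X'$.

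The delicate case, and the point at which the hypothesis $m\geq s+1$ is essential, is the top order $i=m-1$: here the prescribed jet has order $m-1$ at $P$, and the degree budget $ms-2$ no longer leaves room for the naive product construction. Instead I would seek a form in $I_{m\X'}$ of degree $(s-1)m-1$ not vanishing at $P$ and multiply it by a product of $m-1$ general lines through $P$ (total degree $ms-2$, with arbitrary degree-$(m-1)$ leading term at $P$). The existence of such a low-degree form in $I_{m\X'}$ is exactly what fails for small $m$ (as the type $(1,2,3)$, $m=2$ example already shows) and what $m\geq s+1$ is designed to guarantee; establishing it — via the Hilbert function of $m\X'$ computed from the type, or a direct interpolation count — is the main obstacle of the proof. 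Once $(R/(I_{m\X'}+I_P^m))_{ms-2}=0$ is secured, the error term dies and $\Delta\H_{m\X}(ms-1)=r$ follows.
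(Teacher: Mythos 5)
Your architecture coincides with the paper's: relabel via Lemma \ref{relabel2}~$(iii)$ so that none of the $r$ lines passes through $P$, split off $P$ to obtain a $\k$-configuration $\Y=\X\setminus\{P\}$ of type $(2,\ldots,s)$ governed by Theorem \ref{bigmult}, use the sequence $0\to R/I_{m\X}\to R/I_{m\Y}\oplus R/I_P^m\to R/(I_{m\Y}+I_P^m)\to 0$ to reduce everything to the vanishing of $\H_{R/(I_{m\Y}+I_P^m)}(ms-2)$, and attack that vanishing with Lemma \ref{CTVlemma}. Your treatment of the jets of order $i\leq m-2$ is exactly the paper's argument. The genuine gap is the step you yourself flag, $i=m-1$, and the repair you sketch does not work. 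You ask for a form $G\in[I_{m\Y}]_{ms-m-1}$ with $G(P)\neq 0$, to be multiplied by products of $m-1$ lines through $P$. Such a $G$ need not exist, even for $m\geq s+1$: take $s=3$, $m=4$, and the configuration of Figure \ref{specialcase}, so that $\Y$ consists of two points on $\L_2$ and three points on $\L_3$. A repeated Bezout/residuation argument shows that any curve of degree $7$ with multiplicity $4$ at these five points must contain $\L_3$ three times and then $\L_2$ three times, leaving a line forced through five non-collinear points; hence $[I_{4\Y}]_{7}=0$ and there is no candidate $G$ at all. The obstruction is structural: a single $G$ must absorb all the multiplicity-$m$ conditions on $\Y$ in degree $ms-m-1$, which is too low, and the hypothesis $m\geq s+1$ does not rescue this.

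What does work --- and what the paper does --- is to build, for each monomial $M$ of degree $m-1$ in $x_1,x_2$, a separate element of $[I_{m\Y}]_{ms-2}$ whose coefficient of $x_0^{ms-m-1}$ is a nonzero multiple of $M$, letting $M$ itself contribute vanishing at the coordinate points $Q_1=[0{:}1{:}0]$ and $Q_2=[0{:}0{:}1]$ of $\X_2$. For $M=x_1^ax_2^b$ with $a,b\geq 1$ one has $M\in I_{Q_1}^b\cap I_{Q_2}^a$, so $L_2^{m-1}L_3^m\cdots L_s^m\,M\in I_{m\Y}$ already has degree $ms-2$. For the two extremal monomials $x_1^{m-1}$ and $x_2^{m-1}$ a further device is needed: the line joining $P$ to the relevant point of $\X_2$ cannot contain $s$ points of $\X$, so it misses some $\X_j$ entirely; one then replaces $L_j^m$ by $L_j^{m-1}$ times the product of the $j$ lines joining that point of $\X_2$ to the points of $\X_j$ (none of which passes through $P$), while lowering the exponent of $L_2$ to $m-j$. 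This is precisely where $m\geq s+1$ enters: it guarantees $m-j\geq 1$, so the leftover power of $L_2$ still supplies the missing vanishing along $\X_2$. Without a construction of this kind your proof is incomplete at its decisive step.
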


\begin{proof}
  Let $\L_1,\ldots,\L_s$ be the $s$ lines that define the
  $\k$-configuration.  After using Lemma \ref{relabel2} $(iii)$ to
  relabel, we can assume that the $r$ lines that contain $s$ points
  are among $\L_2,\ldots,\L_s$, and thus the unique point $P$ of
  $\X_1$ does not lie on any line containing $s$ points of $\X$.  So,
  we can write our $\k$-configuration as $\X = \Y \cup \{P\}$ where
  $\{P \} = \X_1$ is the point on the line $\L_1$ and $\Y$ is a
  $\k$-configuration of type
  $(2,3,\ldots,s) = (d_1',\ldots,d_{s-1}')$.  Since there is no line
  containing $s$ points of $\X$ that passes through the point $P$,
  the $r$ lines that contain $s$ points of $\X$ must also contain $s$
  points of $\Y$.  So we can apply Theorem \ref{bigmult} to $\Y$.

  Suppose that $\X_2 = \{Q_1,Q_2\}$.  Since $P,Q_1,Q_2$ do not all
  lie on the same line, we can make a linear change of coordinates so
  that
  \[P = [1:0:0],~~~ Q_1 = [0:1:0],~~~~\mbox{and}~~~~ Q_2 = [0:0:1].\]
  We let $L_i$ denote the linear form that defines the line $\L_i$.
  Note that after we have made our change of coordinates, if $\L$, with
  defining form $L = ax_0 + bx_1 + cx_2$, is any line that does
  not pass through $P$, then
  $L \not\in I_P = \langle x_1,x_2 \rangle$, i.e., $a \neq 0$.

  With this setup, we make the following claim:

\noindent
{\it Claim.}  For all $m \geq s+1$,  $\H_{R/(I_{m\Y}+I_P^m)}(ms-2) = 0.$
\vspace{.25cm}

\noindent
{\it Proof of the Claim.}
By Lemma \ref{CTVlemma}, it suffices to show that for each $i=0,\ldots,m-1$,
the monomial $x_0^{ms-2-i}M  \in \big[I_{m\Y}+I_P^{i+1}\big]_{ms-2}$ 
where $M$ is any
monomial of degree $i$ in $x_1,x_2$.   We will treat the
cases $i \in \{0,\ldots,m-2\}$ and $i = m-1$ separately.

Fix an $i \in \{0,\ldots,m-2\}$ and let $M$ be any monomial
of degree $i$ in $x_1,x_2$.   
Since none of the lines $\L_2,\ldots,\L_s$ pass through the point
$P$, we have $L_k = a_{k,0}x_0 + a_{k,1}x_1 + a_{k,2}x_2$ with $a_{k,0} \neq 0$
for all $k=2,\ldots,s$.
Then 
\[L_2^mL_3^m \cdots L_s^m = ax_0^{ms-m} + \sum_{k=1}^{ms-m} x_0^{ms-m-k}f_k(x_1,x_2)\]
with $a \neq 0$ and where 
$f_k(x_1,x_2)$ is a homogeneous polynomial of degree $k$
only in $x_1$ and $x_2$.  Since $L_2\cdots L_s \in I_\Y$, it follows
that 
\[
L_2^m \cdots L_s^m \in \big[(I_\Y)^m\big]_{ms-m} \subseteq  \big[I_{m\Y}\big]_{ms-m} \subseteq \big[I_{m\Y} + I_P^{i+1}\big]_{ms-m}
\]
and thus
\[
ax_0^{ms-m}M + \sum_{k=1}^{ms-m} x_0^{ms-m-k}f_k(x_1,x_2)M \in \big[I_{m\Y} + I_P^{i+1}\big]_{ms-m+i}.  
\] 
But $I_P^{i+1} = \langle x_1,x_2 \rangle^{i+1}$, so $f_k(x_1,x_2)M \in I_P^{i+1}$
for each $k=1,\ldots,ms-m$ since $f_k(x_1,x_2)M$ is a homogeneous
polynomial only in $x_1,x_2$ of degree $i+k \geq i+1$.
But
then this means that 
\[
a^{-1}ax_0^{ms-m}M = x_0^{ms-m}M\in \big[I_{m\Y} + I_P^{i+1}\big]_{ms-m+i}.
\]  
Since $i \leq m-2$, we thus have $x_0^{m-2-i}x_0^{ms-m}M = x_0^{ms-2-i}M \in
 \big[I_{m\Y} + I_P^{i+1}\big]_{ms-2}$.

Now suppose that $i=m-1$.   Consider any monomial $M = x_1^ax_2^b$ with
$a+b = m-1$ and $a,b \geq 1$.  Since $I_{Q_1} = \langle x_0,x_2 \rangle$ 
and $I_{Q_2} = \langle x_0,x_1 \rangle$, this means that $x_1^ax_2^b \in
I_{Q_1}^b \cap I_{Q_2}^a$.  Because $\L_2$ is the line that 
passes through $Q_1$ and $Q_2$, we have 
$L_2^{m-1}M \in (I_{Q_1}^m \cap I_{Q_2}^m)$, and consequently,
\[
\begin{array}{llllllllll}
L_2^{m-1} L_3^m\cdots L_s^mM 
& = & \ds ax_0^{ms-m-1}M + \sum_{k=1}^{ms-m-1} x_0^{ms-m-1-k}f_k(x_1,x_2)M \\[2.5ex] 
& \in & \big[I_{m\Y} \big]_{ms-2} \subseteq \big[I_{m\Y} + 
I_P^{m}\big]_{ms-2}.  
\end{array}
\] 
Arguing as above, this implies that $x_0^{ms-m-1}M \in \big[I_{m\Y} + 
I_P^{m}\big]_{ms-2}.$

It remains to show that $x_0^{ms-m-1}x_1^{m-1}$ and $x_0^{ms-m-1}x_2^{m-1}
\in  \big[I_{m\Y} + I_P^{m}\big]_{ms-2}.$  We only verify the second statement since the
first statement is similar.  Consider the line $\L$ through the
point $P$ and $Q_2$.  Because $\L$ goes through $P$, it does not
contain $s$ points.  In particular, there must be some
$j \in \{3,\ldots,s\}$ such that $\L \cap \X_j = \varnothing$, i.e.,
$\L$ does not intersect with any of the points of $\X$ on the line
$\L_j$.   Let $\X_j = \{S_1,\ldots,S_j\}$ be these $j$ points,
and let $\mathbb{H}_\ell$ be the line through $Q_2$ and $S_\ell$ for 
$\ell = 1,\ldots,j$.  Furthermore, let $H_\ell$ denote the associated
linear form.
Note that none of the lines $\mathbb{H}_\ell$ can pass
through the point $P$, so in particular, each $H_\ell$ has 
the form $H_\ell = a_{\ell}x_0 + b_{\ell}x_1 + c_{\ell}x_2$ with 
$a_{\ell} \neq 0$.

We now claim that 
\[F := x_1^{m-1}
H_{1}\cdots H_{j} L_2^{m-j}L_3^{m}\cdots L_{j-1}^mL_{j}^{m-1}
L_{j+1}^m\cdots L_s^m \in I_{m\Y}.\]   
Because $j \leq s$ and $m \geq s+1$, $m-j \geq 1$.  So, in
particular, $x_1^{m-1} L_2^{m-j} \in I_{Q_1}^m$.  Also, 
$H_{1}\cdots H_{j} L_2^{m-j} \in I_{Q_2}^m$,
so $F$ vanishes at the points of $\{Q_1,Q_2\}$
to the correct multiplicity.  Note that 
$H_{1}\cdots H_{j} L_{j}^{m-1}$ vanishes at 
all the points on $\L_j$ to multiplicity at least $m$.  Furthermore,
for any other $k$, $L_k^m$ vanishes at all the points on $\L_k$
to multiplicity at least $m$.  So we have $F \in I_{m\Y}$.  
To finish the proof, we need to note that 
\[H_{1}\cdots H_{j} L_2^{m-j}L_3^{m}\cdots L_{j-1}^mL_{j}^{m-1}
L_{j+1}^m\cdots L_s^m = 
ax_0^{ms-m-1} + \sum_{k=1}^{ms-m-1} x_0^{ms-m-1-k}f_k(x_1,x_2)\]
with $a \neq 0$ and where each $f_k(x_1,x_2)$ is a homogeneous polynomial
of degree $k$ only in $x_1,x_2$.  The rest of the proof now follows
similar to the cases above. This ends the proof of the claim.

We now complete the proof.
Let $m \geq s+1$ be any integer, and consider the short exact
sequence
\[0 \rightarrow (I_{m\Y} \cap I_P^m) \rightarrow
I_{m\Y} \oplus I_P^m \rightarrow I_{m\Y}+I_P^m \rightarrow 0.\]
Note that the ideal of $m\X$ is $I_{m\X} = I_{m\Y} \cap I_P^m$, so the
short exact sequence implies
\[\H_{m\X}(t) = \H_{m\Y}(t) + \H_{mP}(t) - \H_{R/(I_{m\Y}+I_P^m)}(t)\] 
for all $t \geq 0$.  Note that $\H_{mP}(t) = \binom{m+1}{2}$ for all 
$t \geq m-1$.   Using this fact, and the above claim we get
\begin{eqnarray*}
\Delta \H_{m\X}(ms-1) &= &\H_{m\X}(ms-1) - \H_{m\X}(ms-2) \\
& = &\left(\H_{m\Y}(ms-1) + \H_{mP}(ms-1) - \H_{R/(I_{m\Y}+I_P^m)}(ms-1)\right) - \\
&&
\left(\H_{m\Y}(ms-2) + \H_{mP}(ms-2) - \H_{R/(I_{m\Y}+I_P^m)}(ms-2)\right) \\
& = & \Delta \H_{m\Y}(ms-1) + \left(\binom{m+1}{2}-\binom{m+1}{2}\right)
- (0 -0) \\
& = & r.
\end{eqnarray*} 
The last equality comes from Theorem \ref{bigmult} since 
$\Delta \H_{m\Y}(ms-1) = r$ for all $m \geq 2$.
\end{proof}

\begin{rem} Notice that in the proof of Theorem \ref{case<s}, the hypothesis
that $m \geq s+1$ was only used in the proof of the claim to show that a
particular monomial belonged to the ideal $I_{m\Y}+I_P^m$.  However, there
may be some room for improvement on the lower bound $s+1$.  For example,
for the $\k$-configuration of type $(1,2,3)$ 
 given in Figure \ref{specialcase}, computer
tests have shown that $\Delta \H_{m\X}(m3-1) = 1$ for all $m \geq s =3$,
instead of $s+1 = 4$.  Similarly, if we consider {\it standard 
linear configurations} of type $(1,2,\ldots,s)$
(as defined in \cite[Definition 2.10]{GMS}), then it can be shown
that Theorem \ref{case<s} holds for all $m \geq 2$.  We omit this
proof since it requires the special geometry of standard linear
$\k$-configurations.
\end{rem}


\section{Concluding Remarks}

We conclude this paper with some observations.   Following \cite{CTV},
we define the {\it regularity index} of a zero-dimensional scheme
$\Z \subseteq \mathbb{P}^n$ to be
\[{\rm ri}(\Z) = \min (t ~|~ \H_{\Z}(t) = \deg(\Z)).\]
Embedded in our proof of Theorem \ref{maintheorem}, we actually
computed the regularity index of multiples of a $\k$-configuration.  In particular,
we proved that

\begin{cor} 
 Let $\X \subseteq \mathbb{P}^2$ be a $\k$-configuration of type
  $d=(d_1,\dots,d_s)$.  Then for all integers $m \geq s+1$,
\[{\rm ri}(m\X) = md_s -1. \]
\end{cor}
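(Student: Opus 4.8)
The plan is to reduce the equality ${\rm ri}(m\X) = md_s - 1$ to two conditions and then harvest both of them from the Hilbert-function computations already carried out in the proof of Theorem \ref{maintheorem}. First I would recall that for a zero-dimensional scheme the Hilbert function is non-decreasing and bounded above by the degree, so that once it attains the value $\deg(m\X)$ it stays constant. Hence ${\rm ri}(m\X) = md_s-1$ is equivalent to the conjunction
\[
\H_{m\X}(md_s-1) = \deg(m\X) \qquad\text{and}\qquad \H_{m\X}(md_s-2) < \deg(m\X).
\]

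Next I would establish the equality at $md_s-1$ in each of the cases into which Theorem \ref{maintheorem} was split. When $d_s > s$ (Theorem \ref{bigmult}) and in Cases 1 and 2 for $d_s = s$ (Theorems \ref{case=s+1} and \ref{case=s}), this identity was shown outright, since the sum $f_{\bf v}(md_s-1)$ of the reduction vector equals $\deg(m\X)$ and meets the universal bound $\H_{m\X}(md_s-1) \leq \deg(m\X)$. For Case 3 (Theorem \ref{case<s}) I would reuse the decomposition $\X = \Y \cup \{P\}$ together with the exact-sequence formula $\H_{m\X}(t) = \H_{m\Y}(t) + \H_{mP}(t) - \H_{R/(I_{m\Y}+I_P^m)}(t)$. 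The Claim proved there gives $\H_{R/(I_{m\Y}+I_P^m)}(ms-2) = 0$; since an ideal equal to $R$ in one degree equals $R$ in every higher degree, this vanishing propagates to degree $ms-1$. Feeding in $\H_{m\Y}(ms-1) = \deg(m\Y)$ (Theorem \ref{bigmult} applied to $\Y$), $\H_{mP}(ms-1) = \binom{m+1}{2} = \deg(mP)$, and $\deg(m\X) = \deg(m\Y) + \deg(mP)$ then yields $\H_{m\X}(md_s-1) = \deg(m\X)$ for all $m \geq s+1$.

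Finally, the strict inequality at $md_s-2$ comes for free from the first-difference computations. In every case the line $\L_s$ meets $\X$ in exactly $d_s$ points (Lemma \ref{bounds}), so the number of lines through $d_s$ points is at least one; since that number equals $\Delta \H_{m\X}(md_s-1)$ in all four theorems, we obtain $\H_{m\X}(md_s-2) = \deg(m\X) - \Delta \H_{m\X}(md_s-1) < \deg(m\X)$. Together the two conditions give ${\rm ri}(m\X) = md_s-1$. The uniform threshold $m \geq s+1$ is forced by Case 3 alone; the degenerate type $(1)$, excluded from the earlier theorems, is dispatched directly via ${\rm ri}(mP) = m-1 = md_s-1$. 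The only point requiring real care is the propagation of vanishing in Case 3, but this is immediate from the observation that $I_t = R_t$ implies $I_{t+1} = R_{t+1}$.
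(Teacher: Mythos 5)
Your proof is correct and follows essentially the same route as the paper's: the paper likewise deduces the result from the two facts $\H_{m\X}(md_s-1)=\deg(m\X)$ and $\Delta\H_{m\X}(md_s-1)=r\geq 1$ extracted from Theorems \ref{bigmult}, \ref{case=s+1}, \ref{case=s}, and \ref{case<s}, treating the type $(1)$ case separately via the explicit Hilbert function of a fat point. The only difference is that you spell out the Case 3 detail (propagating the vanishing of $\H_{R/(I_{m\Y}+I_P^m)}$ from degree $ms-2$ to degree $ms-1$ before feeding it into the exact sequence), which the paper leaves implicit.
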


\begin{proof}
  If $d=(1)$, then $\X =\{P\}$ is a single point. It is well-known that
  \begin{equation*}
    \H_{m \X} (t) = \min \left(
      \binom{t+2}{2}, \binom{m+1}{2}
    \right),
  \end{equation*}
  so the regularity index is $m-1$.
    
  If $d\neq (1)$ and if $m \geq s+1$,
Theorems \ref{bigmult}, \ref{case=s+1},
\ref{case=s}, and \ref{case<s}, imply $\H_{m\X}(md_s-2) < \H_{m\X}(md_s-1)$.
Moreover, as part of our proofs, we argued that 
$\H_{m\X}(md_s-1) = \deg(m\X)$.
\end{proof}

The regularity index ${\rm ri}(\Z)$ can also be defined as the 
maximal integer $t$ such that $\Delta \H_\Z(t) \ne 0$.
So, Theorem \ref{maintheorem} can be restated as:

\begin{thm}\label{restatement}
  Let $\X \subseteq \mathbb{P}^2$ be a $\k$-configuration of type
  $d=(d_1,\dots,d_s) \neq (1)$ and $m \geq s+1$.   Then  the number
  of lines containing exactly $d_s$ points of $\X$ is the last
  non-zero value of $\Delta \H_{m\X}(t)$.
\end{thm}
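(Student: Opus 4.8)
The plan is to read Theorem \ref{restatement} as a direct translation of Theorem \ref{maintheorem} through the regularity index, so that the proof assembles results already in hand rather than computing anything new. The three ingredients are: the Corollary just proved, which locates the regularity index of $m\X$; the observation recorded immediately above the theorem, that ${\rm ri}(\Z)$ is the largest $t$ with $\Delta \H_\Z(t) \neq 0$; and Theorem \ref{maintheorem} itself, which evaluates $\Delta \H_{m\X}$ at that position.

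First I would invoke the preceding Corollary, which gives ${\rm ri}(m\X) = md_s - 1$ for every $m \geq s+1$ (recall $d \neq (1)$ is assumed). Combined with the alternative description of the regularity index noted above the theorem, this pins down the position of the last nonzero value of $\Delta \H_{m\X}$: it occurs at $t = md_s - 1$. For completeness I would recall why the two descriptions of ${\rm ri}$ agree: for a zero-dimensional scheme $\Z \subseteq \mathbb{P}^2$ the ring $R/I_\Z$ is Cohen-Macaulay of dimension one, so a general linear form is a nonzerodivisor and $\H_\Z$ is non-decreasing; it attains $\deg(\Z)$ exactly at ${\rm ri}(\Z)$ and stays constant afterward, whence $\Delta \H_\Z(t) > 0$ for $t \leq {\rm ri}(\Z)$ and $\Delta \H_\Z(t) = 0$ for $t > {\rm ri}(\Z)$. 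Next I would evaluate that last nonzero value: by Theorem \ref{maintheorem}, $\Delta \H_{m\X}(md_s - 1)$ equals the number of lines containing exactly $d_s$ points of $\X$, valid for $m \geq m_0$. Since $m_0 = 2$ when $d_s > s$ and $m_0 = s+1$ when $d_s = s$, the hypothesis $m \geq s+1$ forces $m \geq m_0$ in both cases, so Theorem \ref{maintheorem} applies and the last nonzero value of $\Delta \H_{m\X}(t)$ is exactly this count.

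There is no real obstacle here: the content of the theorem is carried entirely by Theorem \ref{maintheorem} and the Corollary, and the argument amounts to a bookkeeping identification of where the first difference last fails to vanish. The only points needing a word of care are confirming that the uniform threshold $m \geq s+1$ dominates every $m_0$ appearing in Theorem \ref{maintheorem}, and noting the non-decreasing behavior of $\H_{m\X}$ so that the phrase ``last nonzero value'' is genuinely realized at the regularity index rather than somewhere earlier.
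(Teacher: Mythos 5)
Your proposal is correct and follows essentially the same route as the paper, which presents Theorem \ref{restatement} as an immediate consequence of the corollary computing ${\rm ri}(m\X)=md_s-1$ for $m\geq s+1$, the identification of the regularity index with the last $t$ where $\Delta\H_{m\X}(t)\neq 0$, and Theorem \ref{maintheorem}. Your added justification that $\H_{m\X}$ is strictly increasing up to the regularity index (so the last nonzero first difference really sits there) is a detail the paper leaves implicit, but it is the same argument.
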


As a final comment, we turn to a question posed
by Geramita, Migliore, and Sabourin \cite{GMS}:

\begin{ques}\label{question}
What are all the possible Hilbert functions of fat point schemes
in $\mathbb{P}^n$ whose support has a fixed Hilbert function $\H$?
\end{ques}

As noted in \cite{GMS}, this question is quite difficult;  in fact,
\cite{GMS} focused on the case of double points in $\mathbb{P}^2$.
Using the work of this paper, we can give an interesting
observation related Question \ref{question}:

\begin{thm} Fix integers $m \geq s+1 \geq 3$.
Then there are at least $s+1$ possible Hilbert functions
of homogeneous fat points of multiplicity $m$ in $\mathbb{P}^2$ whose
support has the Hilbert function
\[\H(t) = \min\left( \binom{t+2}{2}, \binom{s+1}{2} \right).\]
\end{thm}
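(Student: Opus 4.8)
The plan is to exhibit $s+1$ $\k$-configurations of type $(1,2,\dots,s)$ whose $m$-fold fat point schemes have pairwise distinct Hilbert functions, while all of their reduced supports share the prescribed Hilbert function $\H$. The starting observation is that $\H$ is exactly the Hilbert function of any reduced $\k$-configuration of type $(1,2,\dots,s)$: by \cite[Theorem 1.2]{RR} this Hilbert function depends only on the type, and a direct computation from that formula (compare Remark \ref{hilbertfcnreducedbnd}) shows it equals $\min\!\left(\binom{t+2}{2},\binom{s+1}{2}\right)$, the point being that such a configuration has $\binom{s+1}{2}$ points admitting no curve of degree less than $s$ through all of them (seen by peeling off the line $\L_s$ and inducting on $s$). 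Granting this, if $\X$ and $\X'$ are two such configurations with a different number of lines through $s$ points, then Theorem \ref{maintheorem} (applicable since $m \ge s+1 = m_0$ when $d_s=s$) gives $\Delta \H_{m\X}(ms-1)\neq \Delta \H_{m\X'}(ms-1)$, and hence $\H_{m\X}\neq \H_{m\X'}$.

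By Corollary \ref{numberoflines} and the fact that $\L_s$ always qualifies, the number $r$ of lines containing $s$ points of a type $(1,\dots,s)$ configuration satisfies $1\le r\le s+1$. Since there are precisely $s+1$ admissible values, producing $s+1$ distinct fat-point Hilbert functions amounts to realizing \emph{every} value $r\in\{1,\dots,s+1\}$ by some $\k$-configuration of type $(1,\dots,s)$. For $r=s+1$ I would take the star configuration on $s+1$ general lines, whose $\binom{s+1}{2}$ pairwise intersection points form a $\k$-configuration of type $(1,\dots,s)$ with all $s+1$ lines full. For $r=s$ I would use the configuration of Lemma \ref{relabel2}$(ii)$: the $\binom{s}{2}$ intersection points of $s$ general lines $\L_1,\dots,\L_s$, together with one further general point $P_i$ on each $\L_i$. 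For $1\le r\le s-1$ I would fix $r$ general lines as the full lines $\L_s,\dots,\L_{s-r+1}$, assign each intersection $\L_{s-a}\cap\L_{s-b}$ with $a<b$ to the subset $\X_{s-a}$, and pad each $\X_{s-a}$ to $s-a$ points with generic points on $\L_{s-a}$; the remaining subsets $\X_{s-r},\dots,\X_1$ are then supported on generic lines with generic points. The incidence bookkeeping that makes each of these a genuine $\k$-configuration of type $(1,\dots,s)$ is the same sort carried out in the proofs of Lemmas \ref{relabel1} and \ref{relabel2}.

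The main obstacle is the final verification that a generic member of each family has \emph{exactly} $r$ full lines, with no unintended line meeting $\X$ in $s$ points. For a non-designated line $\L_1,\dots,\L_{s-r}$ this is immediate, since a generic line meets each higher $\X_j$ in a point distinct from the finitely many points of $\X_j$ and thus carries only its own $i<s$ points. The genuinely delicate issue, special to the case $d_s=s$, is that by Lemma \ref{extraline} a full line need not lie among $\L_1,\dots,\L_s$: it could pass through the point $P$ of $\X_1$ and a point of $\X_2$. In each construction one places $\X_1$ (and, where possible, $\X_2$) on generic non-designated lines, so a generic choice forces the at most two candidate lines of Lemma \ref{extraline} to miss some $\X_j$ and hence fail to be full, while the analogous genericity keeps the count at exactly $s$ and $s+1$ in the two rigid cases. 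Once every $r\in\{1,\dots,s+1\}$ is realized, the resulting schemes $m\X$ all have reduced support with Hilbert function $\H$ and, by the consequence of Theorem \ref{maintheorem} noted in the first paragraph, pairwise distinct Hilbert functions, which yields the claimed $s+1$ functions.
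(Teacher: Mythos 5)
Your proposal is correct and follows essentially the same route as the paper: identify $\H$ as the Hilbert function of any $\k$-configuration of type $(1,\ldots,s)$ via \cite[Theorem 1.2]{RR}, use Theorem \ref{maintheorem} to separate configurations having different numbers $r$ of lines with $s$ points, and realize every $r \in \{1,\ldots,s+1\}$ by placing the pairwise intersections of $r$ designated lines into the higher-indexed subsets and padding with generic points. Your explicit check, via Lemma \ref{extraline}, that genericity rules out unintended full lines is a detail the paper leaves implicit, but the constructions are the same.
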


\begin{proof}
  Any $\k$-configuration $\X$ of type $(1,2,\ldots,s)$ with $s \geq 2$
  has Hilbert function $\H_\X(t) = \H(t)$ (see \cite[Theorem
  1.2]{RR}).  By Theorem \ref{restatement},
  $\H_{m\X}(ms-2) = \deg(m\X) - r$ where $r$ is the number of lines
  that contain $s$ points of $\X$.  As shown in Lemma \ref{relabel2},
  $1 \leq r \leq s+1$.  It suffices to show that each $r$ is possible;
  this would imply that we have at least $s+1$ different Hilbert
  functions.

Fix $\L_1,\ldots,\L_{s+1}$ distinct lines.  If $r = s+1$, we take
all pairwise intersections of these $s+1$ lines to get the desired
set of points.  So, suppose $1 \leq r \leq s$.
We construct a $\k$-configuration of type $(1,2,\ldots,s)$ with
exactly $r$ lines containing $s$ points as follows:
\begin{enumerate}
\item[$\bullet$] 
let $\X_1$ be any point in $\L_1 \setminus (\L_2\cup \cdots
\cup \L_{s+1})$.
\item[$\bullet$] let $\X_2$ be any two points $\L_2 \setminus (\L_1 \cup \L_3
\cup \cdots \cup \L_{s+1})$.
\item[$\vdots$]
\item[$\bullet$] let $\X_{s-r}$ be any $s-r$ points $\L_{s-r} \setminus 
(\L_1 \cup \cdots \cup \widehat{\L}_{s-r} \cup \cdots \cup \L_s)$.
\item[$\bullet$] let $\X_{s-r+1}$ be any $s-r+1$ points $\L_{s-r+1} \setminus 
(\L_1 \cup \cdots \cup \widehat{\L}_{s-r+1} \cup \cdots \cup \L_s)$.
\item[$\bullet$] let $\X_{s-r+2}$ any $s-r+1$ points on
 $\L_{s-r+2} \setminus 
(\L_1 \cup \cdots \cup \widehat{\L}_{s-r+2} \cup \cdots \cup \L_s)$
and the point $\L_{s-r+2} \cap \L_{s-r+1}$.
\item[$\bullet$] let $\X_{s-r+3}$ be any $s-r+1$ points on
 $\L_{s-r+3} \setminus 
(\L_1 \cup \cdots \cup \widehat{\L}_{s-r+3} \cup \cdots \cup \L_s)$
and the two points $\L_{s-r+3} \cap (\L_{s-r+1} \cup \L_{s-r+2})$.
\item[$\vdots$]
\item[$\bullet$] let $\X_{s}$ be any $s-r+1$ points on
 $\L_{s} \setminus 
(\L_1 \cup \cdots \cup \widehat{\L}_{s})$
and the $r-1$ points $\L_{s} \cap (\L_{s-r+1} \cup \cdots \cup \L_{s-1})$.
\end{enumerate}
This configuration then gives the desired result.
\end{proof}

\begin{rem}
As mentioned in the introduction, $\k$-configurations of points
can be defined in $\mathbb{P}^n$ (see, e.g., \cite{GHS:1,GHS:2,GS,H}).
It is natural to ask if a result similar to Theorem 
\ref{maintheorem} also holds more generally.   Based upon some calculations,
it appears that this may be the case.  For example, let $\X$ be the 
$\k$-configuration of points in $\mathbb{P}^3$ found in 
\cite[Example 4.1]{GS} (see \cite{GS} for both the definition and
a picture).    For this example, one can see that there are three 
lines that contain four points.   The Hilbert function of $2\X$ is given by
\[
\begin{array}{lllllllllllllllllll}
\H_{2\X} & : &  1 & 4 & 10 & 20 & 35 & 50 & 57 & 60 & \ra.
\end{array}\]
Note that  ${\rm ri}(2\X) = 7$.  Also, we have 
$\Delta\H_{2\X}(7)=3$, i.e.,  
the same as the number of lines containing four points,  which is
similar to our statement in Theorem \ref{restatement}.

Although we suspect that a more general result holds, our proof
relies on techniques developed in \cite{CHT} that only give
precise information when the points are in $\mathbb{P}^2$.  
\end{rem}

\noindent
{\bf Acknowledgements} We would like to thank the referee for their helpful comments and suggestions.
Work on this project began when the second and third authors
visited Anthony (Tony) V. Geramita in Kingston, Ontario in the summer of 2015.
Computer
experiments using {\tt CoCoA} \cite{cocoa} inspired our main result. 
Unfortunately, Tony became quite ill soon after our visit, and he passed 
away in June 2016.  We would like to thank Tony for the input he was able 
to provide during the very initial stage of this project.
Shin's research was supported by the Basic Science Research Program of the NRF (Korea) under grant (No. 2016R1D1A1B03931683). Van Tuyl's research was supported 
in part by NSERC Discovery Grant 2014-03898.



\begin{thebibliography}{10}
\bibitem{cocoa} J. Abbott, A. Bigatti, G. Lagorio,
CoCoA-5: a system for doing Computations in Commutative Algebra.
Available at {\tt http://cocoa.dima.unige.it}

\bibitem{BGM}
A. Bigatti, A.V. Geramita, J. Migliore, 
Geometric consequences of extremal behavior in a theorem of Macaulay.
Trans. Amer. Math. Soc. {\bf 346} (1994), 203--235.
 
\bibitem{CTV}
M.V. Catalisano, N.V. Trung, G. Valla, 
A sharp bound for the regularity index of fat points in general position.
Proc. Amer. Math. Soc. {\bf 118} (1993), 717--724. 

\bibitem{CM}
L. Chiantini, J. Migliore,
Almost maximal growth of the Hilbert function.
J. Algebra {\bf 431} (2015), 38--77.

\bibitem{CHT}
S. Cooper, B. Harbourne, Z. Teitler, 
Combinatorial bounds on Hilbert functions of 
fat points in projective space. 
J. Pure Appl. Algebra {\bf 215} (2011), 2165--2179.

\bibitem{GHM}
A.V. Geramita, B. Harbourne, J. Migliore,
Star configurations in $\mathbb{P}^n$.
J. Algebra {\bf 376} (2013), 279--299.
 
\bibitem{GHS:1}
A.V. Geramita, T. Harima, Y.S. Shin, 
An alternative to the Hilbert function for the ideal of a 
finite set of points in $\mathbb{P}^n$.
Illinois J. Math. {\bf 45} (2001), 1--23. 


\bibitem{GHS:5} A.V. Geramita, T. Harima, Y.S. Shin, 
Decompositions of the Hilbert function of a set of points in $\P^n$, 
Canadian J. Math. {\bf 53} (2001), 923--943.

\bibitem{GHS:2}
A.V. Geramita, T. Harima, Y.S. Shin, 
Extremal point sets and Gorenstein ideals.
Adv. Math. {\bf 152} (2000), 78--119.

\bibitem{GMS}
A.V. Geramita, J. Migliore, L. Sabourin, 
On the first infinitesimal neighborhood of a 
linear configuration of points in $\mathbb{P}^2$.
J. Algebra {\bf 298} (2006), 563--611.
 
\bibitem{GS}
A.V.  Geramita, Y.S. Shin, $k$-configurations in 
$\mathbb{P}^3$ all have extremal resolutions. 
J. Algebra {\bf 213} (1999), 351--368. 

\bibitem{H}
T. Harima, 
Some examples of unimodal Gorenstein sequences. 
J. Pure Appl. Algebra {\bf 103} (1995), 313--324. 

\bibitem{RR}
L.G. Roberts, M. Roitman, 
On Hilbert functions of reduced and of integral algebras.
J. Pure Appl. Algebra {\bf 56} (1989), 85--104. 


\bibitem{ZS}
O.\ Zariski, P.\ Samuel,
{\it Commutative Algebra, vol. II}, Springer, 1960.
\end{thebibliography}
\end{document}